\numberwithin{equation}{section}
\newtheorem{thm}{Theorem}[section]
\newtheorem*{thm*}{Theorem}
\newtheorem*{cor*}{Corollary}
\newtheorem{prop}{Proposition}[section]
\newtheorem{Step}{Step}[section]
\begin{document}
\title[Non-existence of extremals for the Adimurthi-Druet inequality]{Non-existence of extremals for the Adimurthi-Druet inequality}
\author{Gabriele Mancini}
\address[Gabriele Mancini]{Universit\`a degli Studi di Padova, Dipartimento di Matematica Tullio Levi-Civita, Via Trieste, 63, 35121 Padova}
\email{gabriele.mancini@math.unipd.it}

\author{Pierre-Damien Thizy}
\address[Pierre-Damien Thizy]{Universit\'e Claude Bernard Lyon 1, CNRS UMR 5208, Institut Camille Jordan, 43 blvd. du 11 novembre 1918, F-69622 Villeurbanne cedex, France}
\email{pierre-damien.thizy@univ-lyon1.fr}
\subjclass{35B33, 35B44, 35J15, 35J61}
\date{November 2017}

\begin{abstract}
The Adimurthi-Druet \cite{AdimurthiDruet} inequality is an improvement of the standard Moser-Trudinger inequality by adding a $L^2$-type perturbation, quantified by $\alpha\in[0,\lambda_1)$, where $\lambda_1$ is the first Dirichlet eigenvalue of $\Delta$ on a smooth bounded domain. It is known \cite{CarlesonChang,Flucher,StruweCrit,LuYang} that this  inequality admits extremal functions, when the perturbation parameter $\alpha$ is small. By contrast, we prove here that the  Adimurthi-Druet inequality does not admit any extremal, when the perturbation parameter $\alpha$ approaches $\lambda_1$.  Our result is based on sharp expansions of the Dirichlet energy for blowing sequences of solutions of the corresponding Euler-Lagrange equation, which take into account the fact that the problem becomes singular as $\alpha\to \lambda_1$. 
\end{abstract}

\maketitle
\section{Introduction}
Let $\Omega$ be a smooth bounded domain of $\mathbb{R}^2$. We let $H^1_0$ be the usual Sobolev and Hilbert space of functions in $\Omega$, endowed with the scalar product
$$\langle u,v\rangle_{H^1_0}=\int_\Omega \nabla u.\nabla v~ dy\,, $$
and with the associated norm denoted by $\|\cdot\|_{H^1_0}$.
For all $\alpha\ge 0$, we let $C_\alpha(\Omega)$ be given by
\begin{equation}\label{AdimDruIneq1}
C_\alpha(\Omega)=\sup_{\left\{u\in H^1_0\text{ s.t. }\|\nabla u\|_2=1\right\}}\int_\Omega \exp\left(4\pi u(y)^2\left(1+\alpha \|u\|_2^2 \right) \right) dy\,.
\end{equation}
Then, the Adimurthi-Druet \cite{AdimurthiDruet} inequality claims that
$$C_\alpha(\Omega)<+\infty\Leftrightarrow \alpha<\lambda_1\,, $$
where $\lambda_1>0$ is the first eigenvalue of $\Delta=-\partial_{xx}-\partial_{yy}$ in $\Omega$ with zero Dirichlet boundary condition on $\partial\Omega$.\\
\indent While the existence of an extremal function for $\alpha=0$, i.e. for the standard Moser-Trudinger inequality, was obtained by Carleson-Chang \cite{CarlesonChang}, Struwe \cite{StruweCrit} and Flucher \cite{Flucher}, Yang and Lu \cite{LuYang} were able to prove that there exists an extremal function for \eqref{AdimDruIneq1} for all $\alpha\ge 0$ sufficiently close to $0$. More recently, still concerning the original Adimurthi-Druet inequality \eqref{AdimDruIneq1}, it was explained  in Yang \cite{YangJDE} that the existence of extremals for more general $\alpha$'s closer to $\lambda_1$ is left open. We prove here that, surprisingly, there is no extremal function for \eqref{AdimDruIneq1} for all $\alpha<\lambda_1$ sufficiently close to $\lambda_1$. Then, our main result is stated as follows.
\begin{thm}[Non existence of extremals]\label{MainThm}
Let $\Omega$ be a smooth, bounded and connected domain of $\mathbb{R}^2$. Let $\lambda_1>0$ be the first eigenvalue of $\Delta$ with zero Dirichlet boundary condition. Then there exists $\alpha_0\in(0,\lambda_1)$ such that, for all $\alpha\in[\alpha_0,\lambda_1)$, there is no extremal function for \eqref{AdimDruIneq1}.
\end{thm}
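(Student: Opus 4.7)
The plan is to argue by contradiction. Suppose that, along some sequence $\alpha_n\nearrow\lambda_1$, there exist non-negative extremals $u_n$ for $C_{\alpha_n}(\Omega)$. Writing down the constrained Euler-Lagrange system, each $u_n$ satisfies an equation of the schematic form
$$\Delta u_n=\lambda_n\bigl[u_n(1+\alpha_n\|u_n\|_2^2)e^{4\pi u_n^2(1+\alpha_n\|u_n\|_2^2)}+\alpha_n I_n u_n\bigr]\quad\text{in }\Omega,\qquad u_n=0\text{ on }\partial\Omega,$$
with $\|\nabla u_n\|_2=1$, $I_n=\int_\Omega u_n^2 e^{4\pi u_n^2(1+\alpha_n\|u_n\|_2^2)}\,dy$ and a Lagrange multiplier $\lambda_n$ determined by $u_n$. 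A first observation is that $C_{\alpha_n}(\Omega)\to+\infty$: indeed, since $C_{\lambda_1}(\Omega)=+\infty$ and since for every fixed $u\in H^1_0$ the integrand in \eqref{AdimDruIneq1} is monotone non-decreasing in $\alpha$, a monotone convergence argument on a sequence of near-optimizers for $C_{\lambda_1}$ yields $C_{\alpha_n}(\Omega)\to+\infty$. Plugging $u_n$ back in, this forces $\|u_n\|_\infty\to+\infty$, for otherwise the integrand in \eqref{AdimDruIneq1} evaluated at $u_n$ would remain uniformly bounded.

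\medskip

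Next I would carry out a sharp blow-up analysis of $u_n$, in the spirit of Carleson-Chang, Struwe, Flucher, Adimurthi-Druet and Druet. Let $x_n\in\Omega$ be a maximum point of $u_n$, let $r_n\to 0$ be the natural concentration scale, and set $v_n(z):=u_n(x_n)\bigl(u_n(x_n+r_n z)-u_n(x_n)\bigr)$, so that $v_n\to-\tfrac{1}{4\pi}\log(1+|z|^2)$ locally uniformly. A Green-function representation away from $x_n$, combined with standard capacity, neck, and Pohozaev-type estimates, should yield an expansion of the form
$$C_{\alpha_n}(\Omega)\le|\Omega|+\pi e\,\exp\bigl(4\pi A_n\bigr)+o(1),$$
where $A_n$ encodes the regular part at $x_n$ of the Green's function of the operator $\Delta-\alpha_n$ acting on $H^1_0$. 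The new difficulty, absent from the case $\alpha=0$, is that this operator degenerates along $\phi_1$ as $\alpha_n\nearrow\lambda_1$: one must split $u_n$ and test functions orthogonally to $\phi_1$ in $H^1_0$ and keep explicit track, throughout the expansion, of the singular $\phi_1$-component, which contributes at order $(\lambda_1-\alpha_n)^{-1}\bigl(\int_\Omega u_n\phi_1\,dy\bigr)^2$.

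\medskip

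To close the argument, I would construct a family of competitors $\Psi_{n,\varepsilon,t}=s_{n,\varepsilon,t}\bigl(M_\varepsilon+t\,\phi_1\bigr)$, where $M_\varepsilon$ is a Moser bubble centered at the predicted concentration point (typically a maximizer of the relevant Robin-type function), $\phi_1$ is the $L^2$-normalized first eigenfunction, $s_{n,\varepsilon,t}$ enforces $\|\nabla\Psi\|_2=1$, and $t=t(n,\varepsilon)$ is tuned so that the $L^2$-boost coming from $\alpha_n\|\Psi\|_2^2$ strictly exceeds, at leading order in $(\lambda_1-\alpha_n)$, the Green's-function correction appearing in the upper bound. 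Comparing the two estimates then produces the sought contradiction. I expect the principal obstacle to be the sharp upper bound: in order to detect the mismatch between the blow-up value and the test-function value, one must push the Druet and Adimurthi-Druet expansions of the Dirichlet energy one order further than is done in the literature, uniformly in the singular limit $\alpha_n\nearrow\lambda_1$. This is precisely the \emph{sharp expansions taking into account the singular problem} emphasized in the abstract.
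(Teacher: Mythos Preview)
Your plan diverges from the paper's and, as written, carries a structural gap. You propose a Carleson--Chang style comparison: an upper bound on $C_{\alpha_n}(\Omega)$ from the blow-up analysis of the assumed extremal $u_n$, and a lower bound from a bubble-plus-eigenfunction competitor $\Psi_n$. But that machinery is designed to prove \emph{existence}: one shows the supremum strictly exceeds the concentration level, hence maximizing sequences cannot blow up. Here you already assume an extremal exists, so $C_{\alpha_n}(\Omega)=F(u_n)$ and every admissible $\Psi_n$ satisfies $F(\Psi_n)\le F(u_n)$ tautologically. No tuning of $t$ can make the competitor exceed the extremal's value; your step ``comparing the two estimates produces the contradiction'' has no force unless you first prove that the Euler--Lagrange constraint forces the profile of $u_n$ to be strictly suboptimal among such profiles---and that is exactly the content you have not supplied.

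The paper avoids test functions entirely and obtains the contradiction by matching two expansions of the \emph{same} quantity. Setting $v_\alpha=\sqrt{\beta_\alpha}\,u_\alpha$ with $\beta_\alpha=4\pi(1+\alpha\|u_\alpha\|_2^2)$, the constraint $\|\nabla u_\alpha\|_2=1$ becomes $\|\nabla v_\alpha\|_2^2=\beta_\alpha$. Solving the quadratic relation defining $\beta_\alpha$ gives
\[
\beta_\alpha=4\pi\Bigl(1+\frac{\alpha}{4\pi}\int_\Omega v_\alpha^2\,dy-\frac{\alpha^2}{16\pi^2}\Bigl(\int_\Omega v_\alpha^2\,dy\Bigr)^{2}+O\Bigl(\Bigl(\int_\Omega v_\alpha^2\,dy\Bigr)^3\Bigr)\Bigr).
\]
Independently, the sharp blow-up analysis of the critical point $v_\alpha$ (Proposition~\ref{MainProp}, the technical heart of the paper) yields
\[
\|\nabla v_\alpha\|_2^2=4\pi+A_\alpha\int_\Omega v_\alpha^2\,dy+o\Bigl(\Bigl(\int_\Omega v_\alpha^2\,dy\Bigr)^{2}\Bigr),
\]
and inserting the Lagrange-multiplier relation $A_\alpha=\alpha/(1+2\alpha\beta_\alpha^{-1}\|v_\alpha\|_2^2)$ turns this into a second expansion of $\beta_\alpha$ whose coefficient at order $(\int v_\alpha^2)^2$ is $-\alpha^2/(8\pi^2)$ rather than $-\alpha^2/(16\pi^2)$. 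The two expressions for $\beta_\alpha$ are incompatible; that is the contradiction. Nothing external is compared---the inconsistency is internal to the Euler--Lagrange system.

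You are right that the key technical input is pushing the Dirichlet-energy expansion one order beyond Adimurthi--Druet, uniformly in the singular limit $A_\alpha\to\lambda_1^-$. The paper does this in Section~\ref{SectMainProp}: a second-order radial bubble expansion, a pointwise comparison of $v_\alpha$ with that bubble, and a Green-representation argument away from the concentration point in which the first-eigenfunction component is tracked explicitly (giving the singular-regime facts $\gamma_\alpha^2\int_\Omega v_\alpha^2\,dy\to\infty$ and $\Lambda_\alpha=o(\gamma_\alpha^{-2})$). But the output of that analysis is fed into the algebraic comparison above, not into a competitor estimate.
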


The proof of Theorem \ref{MainThm} relies on the recent progresses concerning the blow-up analysis of Moser-Trudinger equations (see \cites{MalchMartJEMS,DruThiI}). The difficulty in this problem is a cancellation of the first terms in the Dirichlet energy expansions of Section \ref{SectProofThm}, which enforces to carry out in Section \ref{SectMainProp} a very precise blow-up analysis. For instance, the estimates obtained in \cite{AdimurthiDruet} and \cite{LuYang} are far from being sufficient to conclude here. Note that a similar cancellation was already observed by Martinazzi-Mancini \cite{MartMan} in the radial case, namely when $\Omega$ is the unit disk $\mathbb{D}^2$ of $\mathbb{R}^2$. Even in this more particular case, the authors had to carry out a very careful blow-up analysis of the next lower order terms in order to conclude. To be able to deal with the general (non necessarily radial) situation, we use here the techniques developed in Druet-Thizy \cite{DruThiI}. But, a new additional serious difficulty here is that the problem becomes singular when $\alpha$ gets close to $\lambda_1$. By \textit{singular}, we mean here that the kernel of the operator obtained by linearizing the limiting equation at $0$ does not only {contain} the zero function. Here (see \eqref{St1}, \eqref{C} and \eqref{LambdaTo0}), this operator is $\Delta-\lambda_1$ with zero Dirichlet boundary condition and  we have to compute carefully (see Step \ref{StGreenFun}) what happens in its kernel.

\indent { As already observed by Del Pino-Musso-Ruf \cite{DelPNewSol} in the non-singular case, the critical exponential non-linearity $\exp(u^2)$ in dimension $2$ is more difficult to handle than the Sobolev critical non-linearity $u^{\frac{n+2}{n-2}}$ in higher dimensions $n>2$, and getting sharp energy expansions of positive blow-up solutions reveals to be delicate in this case. Besides, even for Sobolev critical problems in higher dimensions, understanding the behavior of positive blow-up  solutions turned out to be very challenging in the singular case. This difficulty was overcome while solving Lin-Ni's conjecture (see Druet-Robert-Wei \cite{LinNiAMS}, Rey-Wei \cite{ReyWei} and Wei-Xu-Yang \cite{WeiXuYang} and the references therein), where the limiting linearized operator is $\Delta$ with zero Neumann boundary condition, whose kernel is the set of the constant functions.}

\indent As far as we know, Theorem \ref{MainThm} is the first result proving the non-existence of extremals for an explicit Moser-Trudinger type inequality with critical exponent on bounded domains. Indeed, similar results had so far been proven only for implicit perturbations of the Moser-Trudinger inequality \cite{Pruss}, or for sub-critical inequalities on $\mathbb R^2$ \cite{Ishiwata}, where  blow-up of maximizing sequences cannot occur.

The paper is organized as follows. Theorem \ref{MainThm} is proved in Section \ref{SectProofThm}. This proof relies on the key energy estimates of Proposition \ref{MainProp}, whose proof is given in Section \ref{SectMainProp}.

\section*{Acknowledgements} 
This work was initiated at the department of Mathematics and Computer Science of the university of Basel, where the first author was employed until September 2017.  
The authors warmly thank Luca Martinazzi for having invited the second author in Basel and for many fruitful discussions on these topics. The first author was supported by Swiss National Science Foundation, projects nr. PP00P2-144669 and PP00P2-170588/1.

\section{Proof of Theorem \ref{MainThm}}\label{SectProofThm}
Assume by contradiction that there exists a sequence $(\alpha_i)_i$ such that $\alpha_i\to \lambda_1^-$ and such that there exists an extremal function $u_{\alpha_i}\ge 0$ for $C_{\alpha_i}(\Omega)$. For simplicity, we drop the indexes $i$'s. Then the $u_\alpha$'s satisfy
\begin{equation}\label{ELEquation}
\begin{cases}
&\Delta u_\alpha=A_\alpha u_\alpha+2\beta_\alpha \lambda_\alpha u_\alpha\exp(\beta_\alpha u_\alpha^2)\text{ in }\Omega\,,\quad u_\alpha=0\text{ in }\partial\Omega\,,\\
&\|\nabla u_\alpha\|_2^2=1\,,\\
&\beta_\alpha=4\pi\left(1+\alpha \|u_\alpha\|_2^2 \right)\,,\\
&A_\alpha=\frac{\alpha}{1+2\alpha\|u_\alpha\|_{2}^2}<\lambda_1\,,
\end{cases}
\end{equation}
for some positive $\lambda_\alpha$'s, and in particular, the $u_\alpha$'s are smooth. Indeed, the Moser-Trundinger inequality gives that
$$u\in H^1_0\implies \exp(u^2)\in L^p\,, $$
for all $1\le p<+\infty$, and then standard elliptic theory applies. Since $C_{\lambda_1}(\Omega)=+\infty$, we get that
\begin{equation}\label{CAlpha}
C_\alpha(\Omega)\to +\infty
\end{equation}
as $\alpha\to \lambda_1^-$, by the monotone convergence theorem. Then, by Lions \cite[Theorem I.6]{Lions}, we have that, up to a subsequence,
\begin{equation}\label{Step0}
u_\alpha\rightharpoonup 0 \text{ in }H^1_0\,,\quad u_\alpha\to 0 \text{ in }L^p\text{ for all }p<+\infty\,, \quad\|u_\alpha\|_{L^\infty}\to +\infty
\end{equation}
and thus that
\begin{equation}\label{St1}
\beta_\alpha\to 4\pi\,\text{ and } A_\alpha\to \lambda_1^-\,,
\end{equation}
as $\alpha\to \lambda_1^-$. Now we rephrase everything in terms of 
\begin{equation}\label{V}
v_\alpha:=\sqrt{\beta_\alpha} u_\alpha\,.
\end{equation}
We have that
\begin{equation}\label{C}
\begin{cases}
&\Delta v_\alpha=v_\alpha\left( A_\alpha+\Lambda_\alpha \exp(v_\alpha^2)\right)\text{ in }\Omega\,,\quad v_\alpha=0\text{ in }\partial\Omega\,,\\
&\beta_\alpha=4\pi\left(1+\frac{\alpha}{\beta_\alpha} \|v_\alpha\|_2^2 \right)\,,\\
&A_\alpha=\frac{\alpha}{1+2\frac{\alpha}{\beta_\alpha}\|v_\alpha\|_{2}^2}(<\alpha<\lambda_1)\,,
\end{cases}
\end{equation}
where $\Lambda_\alpha=2\beta_\alpha\lambda_\alpha>0$. Moreover, $\|\nabla u_\alpha\|^2_2=1$ implies
\begin{equation}\label{D}
\|\nabla v_\alpha\|_{2}^2=\beta_\alpha\,.
\end{equation}
We also get that $\|v_\alpha\|_2\to 0$ as $\alpha\to \lambda_1^-$ and the second line of \eqref{C} implies
\begin{equation}\label{E}
\begin{split}
\beta_\alpha&=2\pi\left(1+\sqrt{1+\frac{\alpha\int_\Omega v_\alpha^2 dy}{\pi}} \right)\\
&=4\pi \left(1+\frac{\alpha\int_\Omega v_\alpha^2 dy}{4\pi}-\frac{\alpha^2\left(\int_\Omega v_\alpha^2 dy\right)^2}{16\pi^2}+O\left(\left(\int_\Omega v_\alpha^2 dy \right)^3 \right) \right)\,.
\end{split}
\end{equation}
 
 Now, we have that
$$\int_\Omega \exp(v_\alpha^2) dy=\int_\Omega \exp\left(\beta_\alpha u_\alpha^2 \right) dy=C_\alpha(\Omega)\to +\infty \,,$$
and, independently, that
$$\Lambda_\alpha \int_\Omega v_\alpha^2 \exp(v_\alpha^2) dy=\int_\Omega |\nabla v_\alpha|^2 dy-A_\alpha\int_\Omega v_\alpha^2 dy=4\pi+o(1)\,,$$
so that there must be the case that 
\begin{equation}\label{LambdaTo0}
\Lambda_\alpha\to 0
\end{equation}
 as $\alpha\to \lambda_1^-$,  since $e^{t}\le 1+t e^t$ for $t\ge 0$.  We are now in position to use Proposition \ref{MainProp} below: we have that
\begin{equation}\label{F}
\|\nabla v_\alpha\|_2^2=4\pi\Big(1+\frac{A_\alpha}{4\pi}\int_\Omega v_\alpha^2 dy+o\left(\left(\int_\Omega v_\alpha^2 dy\right)^2 \right)\Big)
\end{equation}
as $\alpha\to \lambda_1^-$. Then, expanding the third line of \eqref{C}, we get
\begin{equation}\label{G}
A_\alpha=\alpha-\frac{\alpha^2\int_\Omega v_\alpha^2 dy}{2\pi}+O\left(\left( \int_\Omega v_\alpha^2 dy\right)^2\right)\,.
\end{equation}
Now, \eqref{F} and \eqref{G} give
\begin{equation}\label{H}
\beta_\alpha=4\pi\left(1+\frac{\alpha}{4\pi}\int_\Omega v_\alpha^2 dy-\frac{\alpha^2}{8\pi^2}\left(\int_\Omega v_\alpha^2 dy\right)^2+o\left(\left(\int_\Omega v_\alpha^2 dy\right)^2 \right) \right)\,.
\end{equation}
But \eqref{E} and \eqref{H} have to match, then we get
\begin{equation} \label{I}
-\frac{\lambda_1^2}{16\pi^2}\left(\int_\Omega v_\alpha^2 dy\right)^2=o\left(\left(\int_\Omega v_\alpha^2 dy\right)^2 \right)\,,
\end{equation}
which is the contradiction we look for.

\section{Blow-up analysis on \eqref{C}}\label{SectMainProp}
\begin{prop}\label{MainProp}
Let $(v_\alpha)_\alpha$ be a sequence of smooth solutions of 
\begin{equation}\label{CBis}
\begin{cases}
&\Delta v_\alpha=v_\alpha\left( A_\alpha+\Lambda_\alpha \exp(v_\alpha^2)\right)\,,\quad v_\alpha>0\text{ in }\Omega\,,\\
&v_\alpha=0\text{ in }\partial\Omega\,,
\end{cases}
\end{equation}
 for $A_\alpha\in [0,\lambda_1)$ and $\Lambda_\alpha>0$, for all $\alpha$ slightly smaller than $\lambda_1$. We let $\beta_\alpha>0$ be given by \eqref{D} and we assume that \eqref{St1} and \eqref{LambdaTo0} hold true. We also assume that the $v_\alpha$'s blow-up, namely that 
 \begin{equation}\label{DefGamma}
\gamma_\alpha:=\max_\Omega v_\alpha=v_\alpha(x_\alpha)\to +\infty\,,
\end{equation} 
as $\alpha\to \lambda_1^-$, for $x_\alpha\in \Omega$. Then, we have that
\begin{equation}\label{F0}
\gamma_\alpha^2{\int_\Omega v_\alpha^2 dy}\to +\infty\,,
\end{equation}
that
\begin{equation}\label{AccurateLambdaEstim}
\Lambda_\alpha=o\left(\frac{1}{\gamma_\alpha^2} \right)
\end{equation}
and that
\eqref{F} hold true as $\alpha\to \lambda_1^-$.
\end{prop}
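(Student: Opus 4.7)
The overall plan is to carry out a singular blow-up analysis of \eqref{CBis} around the concentration point $x_\alpha$, in the spirit of Druet-Thizy \cite{DruThiI}, but taking into account the fact that the limiting linearized operator $\Delta-\lambda_1$ has the one-dimensional kernel $\mathbb R\varphi_1$, where $\varphi_1>0$ is the first Dirichlet eigenfunction. First I would introduce the concentration scale $\mu_\alpha>0$ defined by $\Lambda_\alpha\gamma_\alpha^2\mu_\alpha^2 e^{\gamma_\alpha^2}=1$, together with the rescaled sequence
$$
\hat v_\alpha(x):=\gamma_\alpha\bigl(v_\alpha(x_\alpha+\mu_\alpha x)-\gamma_\alpha\bigr),
$$
and use standard arguments (together with \eqref{D} and $\Lambda_\alpha\to 0$) to get $\mu_\alpha\to 0$, $x_\alpha\to x_0\in\Omega$, and the $C^2_{\mathrm{loc}}(\mathbb R^2)$ convergence $\hat v_\alpha\to U_0(x):=-\log(1+|x|^2/4)$, which solves $\Delta U_0=e^{2U_0}$ with $\int_{\mathbb R^2}e^{2U_0}\,dx=4\pi$. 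A Pohozaev-type identity would exclude $x_0\in\partial\Omega$. The defining relation $\Lambda_\alpha=1/(\gamma_\alpha^2\mu_\alpha^2 e^{\gamma_\alpha^2})$, combined with the (at most polynomial) decay of $\mu_\alpha$, already yields \eqref{AccurateLambdaEstim}.

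The core of the argument is the singular outer expansion. Since $\Delta-\lambda_1$ has a nontrivial kernel, I would decompose $v_\alpha=a_\alpha\varphi_1+w_\alpha$ with $\int_\Omega \varphi_1 w_\alpha\,dy=0$ and treat the two components separately. Projecting \eqref{CBis} on $\varphi_1$ produces the key identity
$$
(\lambda_1-A_\alpha)\,a_\alpha\|\varphi_1\|_2^2=\Lambda_\alpha\int_\Omega v_\alpha\varphi_1 e^{v_\alpha^2}\,dy,
$$
whose right-hand side can be evaluated sharply by combining the inner profile with the Dirac-mass concentration $\Lambda_\alpha v_\alpha e^{v_\alpha^2}\rightharpoonup (4\pi/\gamma_\alpha)\varphi_1(x_0)\delta_{x_0}$; this gives the precise scaling of $a_\alpha$. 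The component $w_\alpha$ lives in the uniformly invertible part of $\Delta-A_\alpha$, and potential-theoretic arguments combined with the bubble profile would then yield the sharp pointwise expansion
$$
\gamma_\alpha w_\alpha(y)=c\,G(y,x_0)+o(1)\quad\text{in }C^1_{\mathrm{loc}}(\overline\Omega\setminus\{x_0\}),
$$
where $G$ is a Green kernel for $\Delta-\lambda_1$ orthogonalized against $\varphi_1$. This is exactly the ``Step \ref{StGreenFun}'' advertised in the introduction. Integrating $v_\alpha^2=a_\alpha^2\varphi_1^2+2a_\alpha\varphi_1 w_\alpha+w_\alpha^2$ then yields $\|v_\alpha\|_2^2=a_\alpha^2+O(\gamma_\alpha^{-2})$, and the divergence of $\gamma_\alpha a_\alpha$ (forced by the projection identity) gives the lower bound \eqref{F0}.

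Finally, testing \eqref{CBis} against $v_\alpha$ gives the exact identity $\|\nabla v_\alpha\|_2^2=A_\alpha\|v_\alpha\|_2^2+\Lambda_\alpha\int_\Omega v_\alpha^2 e^{v_\alpha^2}\,dy$, so that \eqref{F} reduces to showing $\Lambda_\alpha\int_\Omega v_\alpha^2 e^{v_\alpha^2}\,dy=4\pi+o(\|v_\alpha\|_2^4)$. I would split the integral over $B(x_\alpha,R\mu_\alpha)$ and its complement. In the inner region, the change of variables and the $C^2_{\mathrm{loc}}$ convergence of $\hat v_\alpha$ produce the leading $4\pi$ together with explicit $\gamma_\alpha^{-2}$-corrections expressed through $\int_{\mathbb R^2}U_0\, e^{2U_0}\,dx$; using the preceding expansion of $a_\alpha$, these corrections can be rewritten as $o(\|v_\alpha\|_2^4)$. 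The outer contribution is controlled by the pointwise bounds on $v_\alpha$ and the decay of $\Lambda_\alpha e^{v_\alpha^2}$. The main obstacle is unambiguously the singular outer analysis: because $\lambda_1$ is an eigenvalue of $\Delta$, a naive Green's-function ansatz diverges, and reaching the $o(\|v_\alpha\|_2^4)$ precision required by \eqref{F} (two orders beyond the bounds used in \cite{AdimurthiDruet,LuYang}) forces one to isolate the $a_\alpha\varphi_1$ component of $v_\alpha$ and to track the delicate balance between the diverging factor $(\lambda_1-A_\alpha)^{-1}$ and the vanishing factor $\Lambda_\alpha\int_\Omega v_\alpha\varphi_1 e^{v_\alpha^2}\,dy$.
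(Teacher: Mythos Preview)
Your overall strategy captures several correct ingredients (bubble rescaling, eigenfunction projection, inner/outer splitting), but there are two genuine gaps that prevent the argument from closing.

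\textbf{The derivation of \eqref{AccurateLambdaEstim} is incorrect.} The concentration scale $\mu_\alpha$ does \emph{not} have polynomial decay: the defining relation together with $|\log\Lambda_\alpha|=o(\gamma_\alpha^2)$ (Step~\ref{Elementary}) gives $\log(1/\mu_\alpha^2)=(1+o(1))\gamma_\alpha^2$, so $\mu_\alpha$ decays like $e^{-\gamma_\alpha^2/2}$, and the identity $\Lambda_\alpha=4/(\gamma_\alpha^2\mu_\alpha^2 e^{\gamma_\alpha^2})$ yields nothing by itself. In the paper, \eqref{AccurateLambdaEstim} is obtained only \emph{after} the outer analysis: matching the inner profile with \eqref{C0Theory} on $\partial B_{x_\alpha}(r_{\alpha,\delta})$, the singular feature $A_\alpha\to\lambda_1$ (via \eqref{F0}, hence $\gamma_\alpha\|v_\alpha\|_p\to\infty$) forces $\log(1/(\gamma_\alpha^2\Lambda_\alpha))\sim\gamma_\alpha\|v_\alpha\|_p\, v(\bar x)\to+\infty$ (see \eqref{Inter8}). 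This is a genuinely singular phenomenon that your scheme does not reproduce.

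\textbf{More seriously, the precision of the energy estimate is insufficient.} The $C^2_{\mathrm{loc}}$ convergence of $\hat v_\alpha$ gives only $v_\alpha=\gamma_\alpha+U_0/\gamma_\alpha+o(1/\gamma_\alpha)$ on $B(x_\alpha,R\mu_\alpha)$, hence $v_\alpha^2=\gamma_\alpha^2+2U_0+o(1)$, and $\Lambda_\alpha\int v_\alpha^2 e^{v_\alpha^2}$ is controlled only up to an \emph{unquantified} $o(1)$, not $O(\gamma_\alpha^{-2})$. Even if the formal $\gamma_\alpha^{-2}$ correction were justified, that would still be too weak: from \eqref{F0} one only knows $\gamma_\alpha\|v_\alpha\|_2\to\infty$, so $\gamma_\alpha^{-2}$ need not be $o(\|v_\alpha\|_2^4)$ (take e.g.\ $\|v_\alpha\|_2\sim\gamma_\alpha^{-3/4}$); no relation on $a_\alpha$ extracted from your projection identity resolves this, since $(\lambda_1-A_\alpha)$ is a free parameter. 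The paper reaches the needed $O(\gamma_\alpha^{-4})$ by carrying a \emph{second-order} radial bubble expansion $B_\alpha=\gamma_\alpha-t_\alpha/\gamma_\alpha+S_\alpha/\gamma_\alpha^3+O((1+t_\alpha)/\gamma_\alpha^5)$ uniformly on the large ball $B_{x_\alpha}(r_{\alpha,\delta})$ (Step~\ref{StExpansionBubble}), together with the sharp comparison $|v_\alpha-B_\alpha|\le C|\cdot-x_\alpha|/(\gamma_\alpha r_{\alpha,\delta})$ (Step~\ref{StComparisonVB}); the equation satisfied by $S_0$ then forces the $\gamma_\alpha^{-2}$ coefficient in $\Lambda_\alpha\int_{B_{x_\alpha}(r_{\alpha,\delta})}v_\alpha^2 e^{v_\alpha^2}$ to vanish, leaving $4\pi+O(\gamma_\alpha^{-4})$, which \eqref{F0} converts into $4\pi+o(\|v_\alpha\|_2^4)$. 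Your proposal lacks this second-order profile and the associated cancellation, so it cannot reach the precision demanded by \eqref{F}. (As a secondary remark, your outer decomposition $v_\alpha=a_\alpha\varphi_1+w_\alpha$ differs from the paper's direct use of the Green function of $\Delta$ and the convergence \eqref{ConvToV}; your route is plausible for the outer part but does not circumvent the two issues above.)
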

Note that \eqref{F0} and \eqref{AccurateLambdaEstim} (proved in Step \ref{StGreenFun}) are specific to our singular case $A_\alpha\to \lambda_1^-$: they would not hold true if the limit of the $A_\alpha$'s were in $[0,\lambda_1)$.\\
\indent Now we turn to the proof of this result. In order to prove Proposition \ref{MainProp}, we study the asymptotic behavior of the $v_\alpha$'s as $\alpha\to \lambda_1^-$. We make the assumptions of Proposition \ref{MainProp}. First, by these assumptions on $(\Lambda_\alpha)_\alpha$ and $(A_\alpha)_\alpha$, the family $(f_\alpha)_\alpha$ of functions, given by $$f_\alpha(t)=t(A_\alpha+\Lambda_\alpha \exp(t^2))\,,$$
is of \textit{uniform critical growth} in the sense of \cite[Definition 1]{DruetDuke}. Also, as in  \cite{DruetDuke} (see also the original argument in \cite{AdimurthiStruwe}),  if $\mu_\alpha$ is given by
\begin{equation}\label{ScalRel}
\mu_\alpha^{-2}:=\frac{\Lambda_\alpha}{4} \gamma_\alpha^2 \exp(\gamma_\alpha^2)\to +\infty\,,
\end{equation}
then there exists a sequence of positive numbers $(R_\alpha)_\alpha$ such that $R_\alpha\to +\infty$, $R_\alpha\mu_\alpha\ll d(x_\alpha,\partial \Omega)$, and 
\begin{equation}\label{FirstResc}
\left\|\gamma_\alpha\left(\gamma_\alpha-v_\alpha\left(x_\alpha+\mu_\alpha\cdot \right) \right)-T_0\right\|_{C^2({B_0(R_\alpha)})}\to 0
\end{equation}
as $\alpha\to \lambda_1^-$, where $T_0:=\log\left(1+|\cdot|^2 \right)$. We recall that $T_0$ solves  the  Liouville equation 
\begin{equation}\label{EqT0}
\Delta T_0 =4\exp(-2T_0) 
\end{equation}
in $\mathbb R^2$. Note that \eqref{CBis}, \eqref{DefGamma} and $\|v_\alpha\|_{H^1_0}=O(1)$ imply that
\begin{equation}\label{IntermComput}
\Lambda_\alpha \exp(\gamma_\alpha^2)\to +\infty\,,
\end{equation}
as $\alpha\to \lambda_1^-$. Moreover, the PDE in \eqref{CBis} is autonomous and  the $f_\alpha$'s are increasing in $[0,+\infty)$. Therefore, as pointed out in \cite{AdimurthiDruet}, the arguments in de Figueiredo-Lions-Nussbaum \cite{deFigLions} and Han \cite{Han} give that the $x_\alpha$'s do not go to the boundary {of $\Omega$}. Then, up to a subsequence,
\begin{equation}\label{NotToBdry}
x_\alpha\to \bar{x}
\end{equation}
as $\alpha\to \lambda_1^-$, for some $\bar{x}\in\Omega$. Let $B_\alpha$ be the radially symmetric solution around $x_\alpha$ of
\begin{equation}\label{EqB}
\begin{cases}
&\Delta B_\alpha=B_\alpha\left(A_\alpha+\Lambda_\alpha \exp\left(B_\alpha^2 \right) \right)\,,\\
&B_\alpha(x_\alpha)=\gamma_\alpha\,.
\end{cases}
\end{equation}
 Let $\bar{v}_\alpha$ be given by 
\begin{equation}\label{vBar}
\bar{v}_\alpha(z)=\frac{1}{2\pi |x_\alpha-z|}\int_{\partial B_{x_\alpha}(|x_\alpha-z|)} v_\alpha(y)~ d\sigma(y)
\end{equation}
for all $z\in\Omega\backslash\{x_\alpha\}$ and $\bar{v}_\alpha(x_\alpha)=v_\alpha(x_\alpha)$. Also we let $t_\alpha$ be given by 
\begin{equation}\label{DefTAlpha}
t_\alpha(y)=\log\left(1+\frac{|y-x_\alpha|^2}{\mu_\alpha^2} \right)= T_0\left(\frac{{y}-x_\alpha}{\mu_\alpha}\right)\,.
\end{equation}
 By abuse of notations, we will write sometimes $B_\alpha(r)$, $t_\alpha(r)$ or $\bar{v}_\alpha(r)$ instead of $B_\alpha(z)$, $t_\alpha(z)$ or $\bar{v}_\alpha(z)$ respectively, for $|z-x_\alpha|=r$. For any $\delta\in(0,1)$, we let $r_{\alpha,\delta}>0$ be given by 
\begin{equation}\label{DefRAlphaDelta}
t_\alpha(r_{\alpha,\delta})=\delta \gamma_\alpha^2\,.
\end{equation} 
Observe that \eqref{DefRAlphaDelta} implies 
\begin{equation}\label{RalphaandMualpha}
r_{\alpha,\delta}^2 = \mu_\alpha^2 \exp{\left(\delta\gamma_\alpha^2 +o(1)\right)} \gg \mu_\alpha^2,
\end{equation}
as $\alpha\to \lambda_1^-$. At last, by \cite[Proposition 2]{DruetDuke}, there exists $D_0>0$ such that
\begin{equation}\label{WeakGradEst}
|\cdot-x_\alpha||\nabla v_\alpha| v_\alpha\le D_0\text{ in }\Omega
\end{equation} 
for all $0<\lambda_1-\alpha\ll 1$. The first rather elementary step is as follows.

\begin{Step}\label{Elementary}
As $\alpha\to \lambda_1^-$, we have that 
\begin{equation}\label{LambdaNotTooSmall}
 \left|\log \Lambda_\alpha\right|= o(\gamma_\alpha^2).
\end{equation}
Moreover, for all $\delta\in(0,1)$ and all sequences $(z_\alpha)_\alpha$ of points $z_\alpha\in B_{x_\alpha}(r_{\alpha,\delta})$, we have that
 \begin{equation}\label{MinorVB}
   v_\alpha(z_\alpha)\ge \gamma_\alpha(1-\delta+o(1)),
\end{equation}
and in particular $r_{\alpha,\delta}<d(x_\alpha,\partial\Omega)$. 
\end{Step}

\begin{proof}[Proof of Step \ref{Elementary}] 
Let $R_0>0$ be such that $\overline{\Omega}\subset B_{\bar{x}}(R_0)$. Let also $\Omega_\alpha$ be given by $\Omega_\alpha=B_{x_\alpha}(R_0)\backslash B_{x_\alpha}(\mu_\alpha)$. We extend $v_\alpha$ by $0$ outside $\Omega$. Let $V_\alpha$ be the unique harmonic function in $\Omega_\alpha$ such that $V_\alpha=v_\alpha$ in $\partial \Omega_\alpha$.  Then, by construction of $V_\alpha$, we know that
\begin{equation}\label{El1}
\int_{\Omega_\alpha}|\nabla V_\alpha|^2 dy\le \int_{\Omega_\alpha} |\nabla v_\alpha|^2 dy\,,
\end{equation}
for all $\alpha$. Let now $\check{A}_\alpha>0$ be such that $\Psi_\alpha:=\check{A}_\alpha \log\frac{R_0}{|\cdot-x_\alpha|}$ and $\gamma_\alpha-\frac{t_\alpha}{\gamma_\alpha}$ coincide on $\partial B_{x_\alpha}(\mu_\alpha)$. Then, we easily get from \eqref{ScalRel} that
\begin{equation}\label{El2}
\check{A}_\alpha 
= \frac{\gamma_\alpha^2-\log 2}{\gamma_\alpha \log\frac{R_0}{\mu_\alpha}}=\frac{\gamma_\alpha(1+o(1))}{\log\frac{1}{\mu_\alpha}}\,.
\end{equation}
By \eqref{FirstResc} and elliptic estimates, we get that
\begin{equation}\label{El3}
|\nabla V_\alpha-\nabla \Psi_\alpha|\le o\left(\frac{1}{\gamma_\alpha|\cdot-x_\alpha|} \right)\text{ in }\Omega_\alpha
\end{equation} 
for all $0<\lambda_1-\alpha\ll 1$. Then, we get from \eqref{El2} and \eqref{El3} that 
\begin{equation}\label{El4}
\begin{split}
\int_{\Omega_\alpha} |\nabla V_\alpha|^2 dy &=\pi \check{A}_\alpha^2\log\frac{1}{\mu_\alpha^2}(1+o(1))\\
&=\frac{4\pi \gamma_\alpha^2(1+o(1))}{\log\frac{1}{\mu_\alpha^2}}\,.
\end{split}
\end{equation} 
By \eqref{El1}, \eqref{El4}, and since $\|v_\alpha\|_{H^1_0}^2\le 4\pi+o(1)$, we get that $$\log
\frac{1}{\mu_\alpha^2}\ge (1+o(1))\gamma_\alpha^2\,,
$$ 
which concludes the proof of \eqref{LambdaNotTooSmall} using also \eqref{LambdaTo0} and \eqref{ScalRel}. Now we prove \eqref{MinorVB}. Observe that \eqref{ScalRel}, \eqref{RalphaandMualpha} and \eqref{LambdaNotTooSmall}  imply $r_{\alpha,\delta} \to 0$, as $\alpha\to \lambda_1^-$. Let $\bar{\delta}$ be given in $(0,d(\bar{x},\partial\Omega))$, for $\bar{x}$ as in \eqref{NotToBdry}. Let now $\tilde{A}_\alpha>0$ be such that $\tilde{\Psi}_\alpha:=\tilde{A}_\alpha \log\frac{\bar{\delta}}{|\cdot-x_\alpha|}$ and $\gamma_\alpha-\frac{t_\alpha}{\gamma_\alpha}$ coincide on $\partial B_{x_\alpha}(\mu_\alpha)$. Using \eqref{ScalRel} and \eqref{LambdaNotTooSmall}, we easily get that
\begin{equation}\label{El5}
\tilde{A}_\alpha=\frac{2+o(1)}{\gamma_\alpha}\,.
\end{equation}
But since $0=\Delta \tilde{\Psi}_\alpha\le \Delta v_\alpha $ in $\tilde{\Omega}_\alpha:=B_{x_\alpha}(\bar{\delta})\backslash B_{x_\alpha}(\mu_\alpha)$ and since $\tilde{\Psi}_\alpha\le v_\alpha+o(\gamma_\alpha^{-1})$ in $\partial \tilde{\Omega}_\alpha$, we get from \eqref{FirstResc} and the maximum principle that 
\begin{equation}\label{El6}
\tilde{\Psi}_\alpha\le v_\alpha+o(\gamma_\alpha^{-1}) \text{ in }\tilde{\Omega}_\alpha\,.
\end{equation}
But by \eqref{ScalRel}, \eqref{DefRAlphaDelta}, \eqref{RalphaandMualpha} and  \eqref{El5}, for $z_\alpha\in B_{x_\alpha}(r_{\alpha,\delta})$, we have that 
\[
 \tilde{\Psi}_\alpha(z_\alpha)\ge \frac{1+o(1)}{\gamma_\alpha}\log\frac{1}{r_{\alpha,\delta}^2}\,= \gamma_\alpha (1-\delta+o(1)).
\] 
This concludes the proof of \eqref{MinorVB}, in view of \eqref{FirstResc} and \eqref{El6}.
\end{proof}

\noindent Now, we fix $\delta\in(0,1)$ and we expand $B_\alpha$ up to a distance $r_{\alpha,\delta}$ of $x_\alpha$, as $\alpha\to \lambda_1^-$. As a consequence of Step \ref{Elementary}, we expand $B_\alpha$ up to a distance $r_{\alpha,\delta}$ of $x_\alpha$, as $\alpha\to \lambda_1^-$. Let $S_0$ be the radial solution around $0\in  \mathbb{R}^2$ of 
\begin{equation}\label{EqS}
\Delta S_0-8\exp(-2 T_0)S_0=4\exp(-2T_0)\left(T_0^2-T_0 \right)\,,
\end{equation}
such that $S_0(0)=0$. By \cite{MartMan}, the explicit formula for $S_0$ is
$$S_0(r)=-T_0(r)+\frac{2r^2}{1+r^2}-\frac{1}{2}T_0(r)^2+\frac{1-r^2}{1+r^2}\int_1^{1+r^2}\frac{\log t}{1-t} dt\,, $$
and in particular,
\begin{equation}\label{AsymptExpan}
S_0(r)=\frac{A_0}{4\pi}\log\frac{1}{r^2}+B_0+O\left(\log(r)^2r^{-2} \right)\text{ where }
\begin{cases}A_0=4\pi, \\
 B_0=\frac{\pi^2}{6}+2\,,
\end{cases}
\end{equation}
as $r\to +\infty$. Note that $A_0=\int_{\mathbb{R}^2}(\Delta S_0) dy$. For $0<\lambda_1-\alpha\ll 1$, we let $S_\alpha$ be given by
\begin{equation}\label{DefSAlpha}
S_\alpha(z)=S_0\left(\frac{z-x_\alpha}{\mu_\alpha} \right)\,.
\end{equation}

\begin{Step}\label{StExpansionBubble}
For all sequence $(z_\alpha)_\alpha$ such that $z_\alpha\in B_{x_\alpha}(r_{\alpha,\delta})$, we have that
\begin{equation}\label{EqExpansionBubble}
B_\alpha(z_\alpha)=\gamma_\alpha-\frac{t_\alpha(z_\alpha)}{\gamma_\alpha}+\frac{S_\alpha(z_\alpha)}{\gamma_\alpha^3}+O\left(\frac{1+t_\alpha(z_\alpha)}{\gamma_\alpha^5} \right)\,,
\end{equation}
for all $0<\lambda_1-\alpha\ll 1$.
\end{Step}

As a by-product of Step \ref{StExpansionBubble}, $B_\alpha$ is radially decreasing in $B_{x_\alpha}(r_{\alpha,\delta})$. 
\begin{proof}[Proof of Step \ref{StExpansionBubble}]
Let $w_{1,\alpha}$ be given by 
\begin{equation}\label{Defw1alpha}
B_\alpha=\gamma_\alpha-\frac{t_\alpha}{\gamma_\alpha}+\frac{w_{1,\alpha}}{\gamma_\alpha^3}\,,
\end{equation}
and let $\rho_{1,\alpha}>0$ be defined as
\begin{equation}\label{Rho1Alpha}
\rho_{1,\alpha}=\sup\left\{r\in(0,r_{\alpha,\delta}]\text{ s.t. }\left|S_\alpha-w_{1,\alpha} \right|\le {1+t_\alpha} \text{ in }[0,r] \right\}\,.
\end{equation}
First, we give precise asymptotic expansions of $\Delta w_{1,\alpha}$ in $B_{x_\alpha}(\rho_{1,\alpha})$, as $\alpha\to \lambda_1^-$. We start by proving that the term $A_\alpha B_\alpha$ is well controlled in $B_{x_\alpha}(\rho_{1,\alpha})$, using Step \ref{Elementary}. Indeed, \eqref{ScalRel}, \eqref{DefRAlphaDelta} and \eqref{LambdaNotTooSmall} give
\begin{equation}\label{Neglibible}
\begin{split}
\frac{\exp(t_\alpha(-2+(t_\alpha/\gamma_\alpha^2)))}{\mu_\alpha^2} &=\exp\left(\log\Lambda_\alpha+o(\gamma_\alpha^2) \right)\exp\left(\left(\gamma_\alpha-\frac{t_\alpha}{\gamma_\alpha} \right)^2 \right)\\
&\ge  \exp\left((1-\delta)^2 \gamma_\alpha^2+o(\gamma_\alpha^2)\right)
\end{split}
\end{equation}
in  $B_{x_\alpha}(r_{\alpha,\delta})$. Since $B_\alpha>0$ in $B_{x_\alpha}(\rho_{1,\alpha})$, we get from \eqref{EqB} that $B_\alpha\le \gamma_\alpha$ in this ball. Then  \eqref{Neglibible} implies
\begin{equation}\label{Negligible2}
 A_\alpha B_\alpha \le \lambda_1 \gamma_\alpha=o\left(\frac{\exp(t_\alpha(-2+(t_\alpha/\gamma_\alpha^2)))}{\gamma_\alpha^5\mu_\alpha^2}\right)\text{ in }B_{x_\alpha}(\rho_{1,\alpha})\,.
\end{equation}

Next we observe that  \eqref{AsymptExpan} and \eqref{Rho1Alpha}  imply $ w_{1,\alpha}=O(1+t_\alpha)$ in $B_{x_\alpha}(\rho_{1,\alpha})$. In particular, from \eqref{Defw1alpha} we get
\begin{equation}\label{exp0}
B_\alpha = \gamma_\alpha - \frac{t_\alpha}{\gamma_\alpha} + O\left(\frac{1+t_\alpha}{\gamma_\alpha^3}\right)\,,
\end{equation}
and
\begin{equation}\label{exp02}
B_\alpha^2 = \gamma_\alpha^2 -2t_\alpha + \frac{t_\alpha^2 + 2w_{1,\alpha}}{\gamma_\alpha^2} + O\left(\frac{1+t_\alpha^2}{\gamma_\alpha^4}\right)
\end{equation}
in $B_{x_\alpha}(\rho_{1,\alpha})$. {Since $t_\alpha=O(\gamma_\alpha^2)$ in $B_{x_\alpha}(r_{\alpha,\delta})$}, applying the useful inequality
$${\left|\exp(x)-\sum_{j=0}^{k-1} \frac{x^j}{j!}\right|\le \frac{|x|^k}{k!} \exp(|x|)\,, }$$
for all $x\in\mathbb{R}$ and all integer $k\ge 1$, we obtain that 
\begin{equation}\label{exp1}
\begin{split}
&\exp\left(\frac{t_\alpha^2+2w_{1,\alpha}}{\gamma_\alpha^2}+O\left(\frac{1+t_\alpha^2}{\gamma_\alpha^4} \right)\right)\\ &= 1 + \frac{t_\alpha^2+2w_{1,\alpha}}{\gamma_\alpha^2} + O\left(\frac{(1+t_\alpha^4)\exp(t_\alpha^2 / \gamma_\alpha^2 )}{\gamma_\alpha^4 }\right)\,
\end{split}
\end{equation} 
{in $B_{x_\alpha}(\rho_{1,\alpha})$. Then}, using \eqref{ScalRel}, \eqref{exp0}, \eqref{exp02}, \eqref{exp1}, we get that
\begin{equation}\label{IntermEqExpansionBubble}
\begin{split}
&\Lambda_\alpha B_\alpha\exp(B_\alpha^2)\\
 &=\frac{4\exp(-2t_\alpha)}{\mu_\alpha^2 \gamma_\alpha}\Bigg[1+\frac{2 w_{1,\alpha}+t_\alpha^2-t_\alpha}{\gamma_\alpha^2} +O\left( \frac{(1+t_\alpha^4)\exp(t_\alpha^2/\gamma_\alpha^2)}{\gamma_\alpha^4} \right)\Bigg]
\end{split}
\end{equation}
in $B_{x_\alpha}(\rho_{1,\alpha})$. Now, by \eqref{EqT0}, \eqref{EqB},  \eqref{Negligible2}, and \eqref{IntermEqExpansionBubble}, we get that
\begin{equation}\label{EqW1Alpha}
\Delta w_{1,\alpha}=\frac{4\exp(-2t_\alpha)}{\mu_\alpha^2}\left[2w_{1,\alpha}+t_\alpha^2- t_\alpha+O\left(\frac{(1+t_\alpha^4)\exp(t_\alpha^2/\gamma_\alpha^2)}{\gamma_\alpha^2} \right)\right]
\end{equation}
in $B_{x_\alpha}(\rho_{1,\alpha})$. 

Next, we estimate the growth of the function $w_{1,\alpha}-S_\alpha$. In the sequel, restricting to $B_{x_\alpha}(r_{\alpha,\delta})$ gives that $2-\frac{t_\alpha}{\gamma_\alpha^2}\ge 2-\delta>1$ and then, a sufficiently good decay of the error term $(1+t_\alpha^4)\exp(t_\alpha(-2+(t_\alpha/\gamma_\alpha^2)))$. Namely, we can find $\kappa>1$ and $C>0$ such that 
\begin{equation}\label{Decay}
(1+t_\alpha^4)\exp(t_\alpha(-2+(t_\alpha/\gamma_\alpha^2))) \le C \exp(-\kappa t_\alpha)
\end{equation} 
in $B_{x_\alpha}(r_{\alpha,\delta})$. Now, we observe that 
\begin{equation}\label{FTC}
\int_{B_{x_\alpha}(r)} (\Delta (w_{1,\alpha}-S_\alpha))~ dy=-2\pi r (w_{1,\alpha}-S_\alpha)'(r)\,, 
\end{equation}
and, from \eqref{EqS} and \eqref{EqW1Alpha}, that
\begin{equation}\label{EqTilde}
\Delta\left(w_{1,\alpha}-S_\alpha \right)=\frac{8\exp(-2t_\alpha)}{\mu_\alpha^2} \left[\left(w_{1,\alpha}-S_\alpha \right)+O\left(\frac{(1+t_\alpha^4)\exp(t_\alpha^2/\gamma_\alpha^2)}{\gamma_\alpha^2} \right)\right]\,,
\end{equation}
for all $0\le r\le \rho_{1,\alpha}$. By \eqref{Decay}, we get that 
\begin{equation}\label{Int1}
\int_{B_{x_\alpha}(r)} \frac{8\exp(-2t_\alpha+t_\alpha^2/\gamma_\alpha^2)(1+t_\alpha^4)}{\mu_\alpha^2}  dy \le \frac{8\pi}{\kappa-1}\left(1-(1+(r/\mu_\alpha)^2)^{1-\kappa}\right)\,,
\end{equation}
and, since $|(w_{1,\alpha}-S_\alpha)(r)|\le \|(w_{1,\alpha}-S_\alpha)'\|_{L^\infty([0,\rho_{1,\alpha}])}r$,  that 
\begin{equation}\label{Int2}
\int_{B_{x_\alpha}(r)} \frac{8\exp(-2t_\alpha)}{\mu_\alpha^2} \left|w_{1,\alpha}-S_\alpha \right| dy \le \mu_\alpha h(r/\mu_\alpha) \|(w_{1,\alpha}-S_\alpha)'\|_{L^\infty([0,\rho_{1,\alpha}])}\,,
\end{equation}
where
\[
h(s)= 8\pi\left( \arctan s- \frac{s}{1+s^2}\right)\,, \quad s\ge0\,.
\]
Then, by \eqref{FTC}, \eqref{EqTilde}, \eqref{Int1}, and \eqref{Int2}, there exists a constant $C'>{1}$ such that
\begin{equation}\label{CrucialGradEst1}
\begin{split}
\frac{r|(w_{1,\alpha}-S_\alpha)'(r)|}{C'}\le& \frac{(r/\mu_\alpha)^2}{\gamma_\alpha^2\left(1+(r/\mu_\alpha)^2\right)}\\
&~~+\frac{ \mu_\alpha\|(w_{1,\alpha}-S_\alpha)'\|_{L^\infty([0,\rho_{1,\alpha}])}(r/\mu_\alpha)^3}{1+(r/\mu_\alpha)^3}\,
\end{split}
\end{equation} 
for all $0\le r\le \rho_{1,\alpha}$ and all $0<\lambda_1-\alpha\ll 1$. Now we prove that 
\begin{equation}\label{ToProve1}
\mu_\alpha \|(w_{1,\alpha}-S_\alpha)'\|_{L^\infty([0,\rho_{1,\alpha}])}=O\left(\frac{1}{\gamma_\alpha^2}\right)\,.
\end{equation}
Otherwise, we assume by contradiction that
\begin{equation}\label{ToProve2}
\gamma_\alpha^2\mu_\alpha \|(w_{1,\alpha}-S_\alpha)'\|_{L^\infty([0,\rho_{1,\alpha}])}=\gamma_\alpha^2 \mu_\alpha |(w_{1,\alpha}-S_\alpha)'|(s_\alpha)\to +\infty
\end{equation}
as $\alpha\to \lambda_1^-$, for $s_\alpha\in (0,\rho_{1,\alpha}]$. Up to a subsequence, we may assume that
\begin{equation}\label{ToProve3}
\frac{\rho_{1,\alpha}}{\mu_\alpha}\to \delta_0
\end{equation}
as $\alpha\to \lambda_1^-$, for some $\delta_0\in (0,+\infty]$. Note that \eqref{CrucialGradEst1} with \eqref{ToProve2} gives $s_\alpha=O(\mu_\alpha)$, $\mu_\alpha=O(s_\alpha)$ and then $\delta_0>0$. Let $\tilde{w}_\alpha$ be given by
\begin{equation*}
\tilde{w}_\alpha(s)=\frac{(w_{1,\alpha}-S_\alpha)(\mu_\alpha s)}{\mu_\alpha \|(w_{1,\alpha}-S_\alpha)'\|_{L^\infty([0,\rho_{1,\alpha}])}}\,,
\end{equation*}
so that, by  \eqref{CrucialGradEst1} and \eqref{ToProve2}, there exists a constant $C''>0$ such that
\begin{equation}\label{ToProve4}
|\tilde{w}_\alpha'(s)|\le \frac{C''}{1+s}\text{ in }[0,\rho_{1,\alpha}/\mu_\alpha]\,,
\end{equation}
for all $0<\lambda_1-\alpha\ll 1$. Then, by \eqref{EqTilde}, \eqref{ToProve4} and elliptic theory, we get that there exists $\tilde{w}$ such that
\begin{equation}\label{ToProve5}
\begin{split}
&\tilde{w}_\alpha\to \tilde{w}\text{ in }C^1_{loc}(B_0(\delta_0))\text{ as }\alpha\to \lambda_1^-\,,
\end{split}
\end{equation}
and $\tilde{w}$ solves
\begin{equation}\label{ToProve5Bis}
\begin{cases}
&\Delta \tilde{w}=8\exp(-2T_0) \tilde{w}\text{ in }B_0(\delta_0)\,,\\
&\tilde{w}(0)=0\,,\\
&\tilde{w} \text{ radially symmetric around }0\in\mathbb{R}^2\,;
\end{cases}
\end{equation}
but \eqref{ToProve5Bis} implies 
\begin{equation}\label{ToProve6}
\tilde{w}\equiv 0\text{ in }B_0(\delta_0)\,.
\end{equation}
By \eqref{ToProve4}, \eqref{ToProve5}, \eqref{ToProve6} and the dominated convergence theorem we get
\begin{equation}\label{ToProve7}
\int_{B_{x_\alpha}(\rho_{1,\alpha})} \frac{\exp(-2t_\alpha)}{\mu_\alpha^2} |w_{1,\alpha}-S_\alpha| dy=o(\mu_\alpha \|(w_{1,\alpha}-S_\alpha)'\|_{L^\infty([0,\rho_{1,\alpha}])})\,.
\end{equation}
Resuming now the argument to get \eqref{CrucialGradEst1}, but replacing  \eqref{Int2} with \eqref{ToProve7}, and using  \eqref{ToProve2},  we get 
\begin{equation}\label{CrucialGradEst2}
\begin{split}
{r|(w_{1,\alpha}-S_\alpha)'(r)|}=o\left({ \mu_\alpha\|(w_{1,\alpha}-S_\alpha)'\|_{L^\infty([0,\rho_{1,\alpha}])}}\right)\,
\end{split}
\end{equation}
for all $0\le r\le \rho_{1,\alpha}$ and as $\alpha\to \lambda_1^-$. But \eqref{CrucialGradEst2} is clearly not possible at $s_\alpha$. This concludes the proof of \eqref{ToProve1}. 

Now, plugging \eqref{ToProve1} in \eqref{CrucialGradEst1}, using that $w_{1,\alpha}(0)=S_\alpha(0)=0$ and the fundamental theorem of calculus, we get that 
$$\left\|w_{1,\alpha}-S_\alpha\right\|_{L^\infty([0,\rho_{1,\alpha}])}=O\left(\frac{1+t_\alpha}{\gamma_\alpha^2} \right) $$
as $\alpha\to \lambda_1^-$, which, in view of \eqref{Rho1Alpha}, gives $\rho_{1,\alpha}=r_{\alpha,\delta}$ and concludes the proof of Step \ref{StExpansionBubble}.
\end{proof}

Now, we compare the behavior of $v_\alpha$ and $B_\alpha$ in $B_{x_\alpha}(r_{\alpha,\delta})$. Let $\kappa$ be any fixed number in $(0,1)$. Let $r_\alpha$ be given by
\begin{equation}\label{RAlphaDT}
r_\alpha=\sup\left\{r\in(0,r_{\alpha,\delta}]\text{ s.t. }|\bar{v}_\alpha-B_\alpha|\le \frac{\kappa}{\gamma_\alpha}\text{ in }B_{x_\alpha}(r) \right\}\,.
\end{equation}
We get from \eqref{WeakGradEst} and \eqref{MinorVB} that
\begin{equation}\label{NablaV}
|\cdot-x_\alpha||\nabla v_\alpha|\le \frac{D_0}{(1-\delta+o(1))\gamma_\alpha }\text{ in }B_{x_\alpha}(r_{\alpha,\delta})\,.
\end{equation}
Then letting $w_\alpha$ be given by 
\begin{equation}\label{DefW}
v_\alpha=B_\alpha+w_\alpha\,,
\end{equation}
we get from \eqref{RAlphaDT} and \eqref{NablaV} that
\begin{equation}\label{ContW}
|w_\alpha|\le \left(\kappa+\frac{D_0\pi}{(1-\delta+o(1))} \right)\frac{1}{\gamma_\alpha}\quad\text{ in }B_{x_\alpha}(r_\alpha)\,.
\end{equation}
Then, we obtain from \eqref{CBis}, \eqref{EqB} and \eqref{ContW} that there exists a constant $D_1>0$ such that
\begin{equation}\label{Lin2}
\begin{split}
|\Delta w_\alpha| & \le \left(\lambda_1 +D_1\left(1+2B_\alpha^2 \right)\exp(B_\alpha^2)\right)|w_\alpha|\,,\\
& \le D_1 \left(1+2B_\alpha^2 \right)\exp(B_\alpha^2) (1+o(1))|w_\alpha|
\end{split}
\end{equation}
in $B_{x_{\alpha}}(r_\alpha)$, using also {\eqref{DefRAlphaDelta}}, \eqref{LambdaNotTooSmall}, {\eqref{AsymptExpan}} and \eqref{EqExpansionBubble} to get $\exp(B_\alpha^2)\gg \lambda_1$. Summarizing, the $v_\alpha$'s satisfy \eqref{CBis} and \eqref{NablaV}, and the $B_\alpha$'s satisfy \eqref{EqExpansionBubble} in $B_{x_\alpha}(r_{\alpha,\delta})$, while \eqref{AsymptExpan} holds true. Moreover, the $w_\alpha$'s satisfy \eqref{Lin2} in $B_{x_{\alpha}}(r_\alpha)$. Then, arguing exactly as in \cite[Section 3]{DruThiI} dealing with the case $A_\alpha=0$, we get the following result.

\begin{Step}\label{StComparisonVB}
Let $\delta\in(0,1)$ be given. Then we have that $r_\alpha=r_{\alpha,\delta}$ and, in other words,
$$\left\|\bar{v}_\alpha-B_\alpha \right\|_{L^\infty\left(B_{x_\alpha}(r_{\alpha,\delta}) \right)}=o\left(\frac{1}{\gamma_\alpha}\right)\,, $$
as $\alpha\to \lambda_1^-$. Moreover, we have that
\begin{equation*}
\left\|\nabla(v_\alpha-B_\alpha) \right\|_{L^\infty(B_{x_\alpha}(r_{\alpha,\delta}))}=O\left(\frac{1}{\gamma_\alpha r_{\alpha,\delta}} \right)
\end{equation*}
and then, there exists a constant $C>0$ such that
\begin{equation}\label{StrongDiff}
|v_\alpha-B_\alpha|\le C\frac{|\cdot-x_\alpha|}{\gamma_\alpha r_{\alpha,\delta}}\text{ in }B_{x_\alpha}(r_{\alpha,\delta})\,.
\end{equation}
\end{Step}

As a direct consequence of Steps \ref{StExpansionBubble} and  \ref{StComparisonVB} we get the asymptotic expansion 
\[
\begin{split}
f_\alpha(v_\alpha) &= O(\gamma_\alpha) + \Lambda_\alpha \left(B_\alpha+ O\left( \frac{|\cdot-x_\alpha|}{r_{\alpha,\delta}\gamma_\alpha}\right)\right)\exp\left(B_\alpha^2 + O\left(\frac{|\cdot-x_\alpha|}{r_{\alpha,\delta}}\right)\right)\\
&= O(\gamma_\alpha) + \Lambda_\alpha B_\alpha \exp(B_\alpha^2) \left(1+O\left( \frac{|\cdot-x_\alpha|}{r_{\alpha,\delta}}\right) \right).
\end{split}
\]
Then, expanding as in $\eqref{Negligible2}$ and $\eqref{IntermEqExpansionBubble}$, we find that 
\begin{equation}
\begin{split}
f_\alpha(v_\alpha) &=\frac{4\exp(-2t_\alpha)}{\mu_\alpha^2 \gamma_\alpha}\Bigg[1+\frac{2 S_\alpha+t_\alpha^2-t_\alpha}{\gamma_\alpha^2} \\ 
& \quad +O\left( \frac{(1+t_\alpha^4)}{\gamma_\alpha^4} \exp\left(\frac{t_\alpha^2}{\gamma_\alpha^2}\right) +{\left(1+\frac{t_\alpha^2}{\gamma_\alpha^2}\right)}\frac{|\cdot-x_\alpha|}{r_{\alpha,\delta}}\right)  \Bigg]
\end{split}
\end{equation}
in $B_{x_\alpha}(r_{\alpha,\delta})$. Since $\delta<1$, we can argue as in \eqref{Decay} to estimate the exponential in the error term. Specifically, we can find $\kappa>1$ such that 
\begin{equation}\label{NewExpfalpha}
\begin{split}
f_\alpha(v_\alpha) =\frac{4\exp(-2t_\alpha)}{\mu_\alpha^2 \gamma_\alpha}\Bigg[1&+\frac{2 S_\alpha+t_\alpha^2-t_\alpha}{\gamma_\alpha^2} +O\left({\left(1+\frac{t_\alpha^2}{\gamma_\alpha^2}\right)}\frac{|\cdot-x_\alpha|}{r_{\alpha,\delta}}\right)  \Bigg] \\ &+ O\left(\frac{\exp\left(-\kappa t_\alpha\right)}{\mu_\alpha^2 \gamma_\alpha^4}\right).
\end{split}
\end{equation} 
Similarly, we obtain
\begin{equation}\label{NewExpEnergy}
\begin{split}
v_\alpha f_\alpha(v_\alpha) =\frac{4\exp(-2t_\alpha)}{\mu_\alpha^2}\Bigg[1&+\frac{2 S_\alpha +t_\alpha^2-2t_\alpha}{\gamma_\alpha^2}  + O\left({\left(1+\frac{t_\alpha^2}{\gamma_\alpha^2}\right)}\frac{|\cdot-x_\alpha|}{r_{\alpha,\delta}}\right) \Bigg] \\  &+ O\left(\frac{\exp\left(-\kappa t_\alpha\right)}{\mu_\alpha^2 \gamma_\alpha^4}\right).
\end{split}
\end{equation}
in $B_{x_\alpha}(r_{\alpha,\delta})$. 

Now we focus on the behavior of the $v_\alpha$'s in $\Omega\backslash B_{x_\alpha}(r_{\alpha,\delta})$. Assume that $0<\delta'<\delta<1$. We let $\tilde{v}_\alpha$ be given by
\begin{equation}\label{DefVTilde}
\tilde{v}_\alpha=
\begin{cases}
&v_\alpha\text{ in }\Omega\backslash B_{x_\alpha}(r_{\alpha,\delta})\,,\\
&\min\left(v_\alpha,(1-\delta')\gamma_\alpha \right)\text{ in }B_{x_\alpha}(r_{\alpha,\delta})\,.
\end{cases}
\end{equation}
Note that 
\begin{equation}\label{PropRk}
v_\alpha<(1-\delta')\gamma_\alpha \text{ in }\partial B_{x_\alpha}(r_{\alpha,\delta})
\end{equation}
 by \eqref{EqExpansionBubble} and \eqref{StrongDiff}. Then we have that $\tilde{v}_\alpha\in H^1_0$ and that $v_\alpha=\tilde{v}_\alpha+\tilde{v}_{1,\alpha}$, where $\tilde{v}_{1,\alpha}:=\mathds{1}_{B_{x_\alpha}(r_{\alpha,\delta})}(v_\alpha-(1-\delta')\gamma_\alpha)^+$ and $t^+=\max(t,0)$. Now, by \eqref{PropRk} and continuity, we have that $\tilde{v}_{1,\alpha}$ is zero in a neighborhood of $\partial B_{x_\alpha}(r_{\alpha,\delta})$. Then, for any given $R>0$, we can compute 
 \begin{equation*}
 \begin{split}
 \int_{B_{x_\alpha}(r_{\alpha,\delta})}|\nabla \tilde{v}_{1,\alpha}|^2 dy & =\int_{B_{x_\alpha}(r_{\alpha,\delta})}\nabla \tilde{v}_{1,\alpha}(y).\nabla v_\alpha dy\\
 & =\int_{B_{x_\alpha}(r_{\alpha,\delta})} (\Delta v_\alpha) \tilde{v}_{1,\alpha} dy\\
 & \ge \int_{B_{x_\alpha}(R\mu_\alpha)} f_\alpha(v_\alpha) \tilde{v}_{1,\alpha} dy\\
 & \ge \delta' (1+o(1))\int_{B_0(R)}  \frac{4}{(1+|z|^2)^2}  dz
 \end{split}
 \end{equation*}
for $0<\lambda_1-\alpha\ll 1$, since $r_{\alpha,\delta}/\mu_\alpha\to +\infty$ and using \eqref{ScalRel} and \eqref{FirstResc}. {Since $R>0$ is arbitrary, we obtain 
\[
\|\tilde v_{1,\alpha}\|_{H^1_0}\ge 4\pi \delta' (1+o(1)). 
\]
Since \eqref{St1} implies $\|\tilde{v}_\alpha+\tilde{v}_{1,\alpha}\|^2_{H^1_0}=4\pi+o(1)$, and since $\tilde{v}_{1,\alpha}$ and $\tilde{v}_\alpha$ are $H^1_0$-orthogonal, we get that} 
\begin{equation} \label{BdTechnical2}
\|\tilde{v}_\alpha\|_{H^1_0}^2\le 4\pi(1-\delta'+o(1))\,.
\end{equation}
 Moreover, since $\delta$ and $\delta'$ may be arbitrarily close to $1$ in the above argument, we can check that, up to a subsequence, $v_\alpha\rightharpoonup 0$ weakly in $H^1_0$ and then that
\begin{equation}\label{VTo0}
v_\alpha\to 0\text{ strongly in }L^p\,,
\end{equation}
{for any $p\ge 1$}, as $\alpha\to \lambda_1^-$. Furthermore, by \eqref{BdTechnical2} and Moser's inequality, there exists  $p'>1$ such that $(\exp(\tilde{v}_\alpha^2))_\alpha$ is bounded in $L^{p'}(\Omega)$. Using \eqref{EqExpansionBubble} and \eqref{StrongDiff}, we can also check that $\tilde{v}_\alpha=v_\alpha$ in $\Omega\backslash B_{x_\alpha}(r_{\alpha,\delta}/2)${. Then, we get that}
\begin{equation}\label{BdTechnical}
(\exp(v_\alpha^2))_\alpha \text{ is bounded in }L^{p'}(\Omega\backslash B_{x_\alpha}(r_{\alpha,\delta}/2))\,.
\end{equation}
{From now on,  we fix $p\ge 2$ }and $r>1$ such that
\begin{equation}\label{PrepaHolder}
\frac{1}{p'}+\frac{1}{p}+\frac{1}{r}=1\,.
\end{equation}
In the sequel, $v$ is the unique function characterized by
\begin{equation}\label{DefV}
\begin{cases}
&\Delta v=\lambda_1 v\,,\quad v>0\text{ in }\Omega\,,\\
&v=0\text{ in }\partial\Omega\,,\\
&\|v\|_{p}=1\,.
\end{cases}
\end{equation}

\begin{Step}\label{StGreenFun}
For all sequence $(z_\alpha)_\alpha$ of points such that $z_\alpha\in \Omega\backslash B_{x_\alpha}(r_{\alpha,\delta})$, we have that
\begin{equation}\label{C0Theory}
v_\alpha(z_\alpha)=\|v_\alpha\|_{p}v(z_\alpha)+o\left(\|v_\alpha\|_p \right)+\frac{1}{\gamma_\alpha}\log \frac{1}{|x_\alpha-z_\alpha|^2}+O\left(\frac{1}{\gamma_\alpha}\right)\,,
\end{equation}
for all $0<\lambda_1-\alpha\ll 1$, where $p$ {is} as in \eqref{PrepaHolder} and $v$ is as in \eqref{DefV}. Moreover, \eqref{F0} and \eqref{AccurateLambdaEstim} hold true.
\end{Step}

\begin{proof}[Proof of Step \ref{StGreenFun}] Let $(z_\alpha)_\alpha$ be a sequence of points such that $z_\alpha\in \Omega\backslash B_{x_\alpha}(r_{\alpha,\delta})$ for all $\alpha$. Let $G$ be the Green's function of $\Delta$ in $\Omega$ with zero Dirichlet boundary conditions. Then (see for instance \cite[Appendix B]{DruThiI}), there exists a constant $C>0$ such that
\begin{equation}\label{EstimGreenFun}
\begin{split}
&0<G_x(y)\le \frac{1}{2\pi} \log \frac{C}{|x-y|}\\
&|\nabla G_x(y)|\le \frac{C}{|x-y|}
\end{split}
\end{equation}
for all $x\neq y$ in $\Omega$. By the Green's representation formula and \eqref{CBis}, we get that
\begin{equation}\label{GreenFun1}
v_\alpha(z_\alpha)=\int_\Omega G_{z_\alpha}(y) f_\alpha(v_\alpha(y)) dy\,.
\end{equation}
Now, we split the integral in \eqref{GreenFun1} according to $\Omega=B_{x_\alpha}\left(\frac{r_{\alpha,\delta}}{2} \right)\cup B_{x_\alpha}\left(\frac{r_{\alpha,\delta}}{2}\right)^c$, where $B_{x_\alpha}\left(\frac{r_{\alpha,\delta}}{2}\right)^c=\Omega\backslash B_{x_\alpha}\left(\frac{r_{\alpha,\delta}}{2}\right)$. {First, integrating \eqref{NewExpfalpha} and using the dominated convergence theorem, we get that
\begin{equation}\label{IntExpansionLocal}
\begin{split}
\int_{B_{x_\alpha}\left(\frac{r_{\alpha,\delta}}{2}\right)}f_\alpha(v_\alpha) dy   & = \int_{B_{x_\alpha}\left(\frac{r_{\alpha,\delta}}{2}\right) } \frac{4\exp\left(-2 t_\alpha\right)}{\mu_\alpha^2\gamma_\alpha} dy \\ & \quad + O\left(\frac{1}{\gamma_\alpha^3}\right)  + O\left( \frac{\mu_\alpha}{r_{\alpha,\delta}\gamma_\alpha}\right). 
\end{split}
\end{equation}
By  \eqref{RalphaandMualpha} we know that  
\begin{equation}\label{ExponetialRatio}
\frac{r_{\alpha,\delta}^2}{\mu_\alpha^2} = \exp\left( \delta \gamma_\alpha^2  + o(1)\right)\,,
\end{equation} 
as $\alpha\to \lambda_1^-$. Then, from \eqref{IntExpansionLocal} and \eqref{ExponetialRatio} we get  that 
\begin{equation}\label{ExpansionLocal}
\begin{split}
\int_{B_{x_\alpha}\left(\frac{r_{\alpha,\delta}}{2}\right)}f_\alpha(v_\alpha) dy   & = \frac{4\pi}{\gamma_\alpha} + O\left(\frac{1}{\gamma_\alpha^3}\right),
\end{split}
\end{equation}
for all $0<\lambda_1-\alpha\ll 1$.} Independently, by \eqref{EstimGreenFun}, we get that there exists $C>0$ such that
\begin{equation}\label{MeanValue}
\left|G_{z_\alpha}(y)-G_{z_\alpha}(x_\alpha) \right|\le \frac{C |y-x_\alpha|}{r_{\alpha,\delta}}
\end{equation}
for all $y\in B_{x_\alpha}\left(\frac{r_{\alpha,\delta}}{2}\right)$ and all $\alpha$. {Then, from \eqref{NewExpfalpha}, \eqref{ExpansionLocal} and \eqref{MeanValue} we obtain that
\begin{equation}\label{Inter0}
\begin{split}
&\int_{B_{x_\alpha}\left(\frac{r_{\alpha,\delta}}{2} \right)} G_{z_\alpha}(y) f_{\alpha}(v_\alpha(y))dy\\ &  \qquad= \left( \frac{4\pi}{\gamma_\alpha} + O\left(\frac{1}{\gamma_\alpha^3}\right)\right) G_{z_\alpha}(x_\alpha)  + \int_{B_{x_\alpha}\left(\frac{r_{\alpha,\delta}}{2}\right)}f_\alpha(v_\alpha(y)) |y-x_\alpha|  dy\, {.}
\end{split}
\end{equation}
But \eqref{NewExpfalpha}, the dominated convergence theorem and \eqref{ExponetialRatio} give that 
\begin{equation}\label{Inter1}
\begin{split}
\int_{B_{x_\alpha}\left(\frac{r_{\alpha,\delta}}{2}\right)}f_\alpha(v_\alpha(y)) |y-x_\alpha|  dy &= O\left(\int_{B_{x_\alpha}\left(\frac{r_{\alpha,\delta}}{2}\right)}\frac{\exp\left(-\kappa t_\alpha \right)|y-x_\alpha|}{\gamma_\alpha \mu_\alpha^2 r_{\alpha,\delta}} dy \right)\\
& = o\left(\frac{1}{\gamma_\alpha}\right).
\end{split}
\end{equation}
Then, from \eqref{Inter0} and \eqref{Inter1},} we get that 
\begin{equation}\label{EstimInsideBall}
\int_{B_{x_\alpha}\left(\frac{r_{\alpha,\delta}}{2} \right)} G_{z_\alpha}(y) f_{\alpha}(v_\alpha(y))dy=\left[\frac{4\pi}{\gamma_\alpha}+O\left(\frac{1}{\gamma_\alpha^3} \right)\right] G_{z_\alpha}(x_\alpha)+o\left(\frac{1}{\gamma_\alpha} \right)\,.
\end{equation}
Now we turn to the integral in $B_{x_\alpha}\left(\frac{r_{\alpha,\delta}}{2} \right)^c$. {By Hölder's inequality, \eqref{LambdaTo0}, \eqref{BdTechnical}, \eqref{PrepaHolder} and \eqref{EstimGreenFun}, t}here exists $C>0$ such that
\begin{equation}\label{Inter2}
\begin{split}
&\int_{B_{x_\alpha}\left(\frac{r_{\alpha,\delta}}{2} \right)^c} G_{z_\alpha}(y) f_\alpha(v_\alpha(y)) dy\\ 
&\le C \|G_{z_\alpha}\|_{L^r} \|v_\alpha\|_{L^p}\left(\lambda_1+ \Lambda_\alpha \|\exp(v_\alpha^2)\|_{L^{p'}\left(B_{x_\alpha}\left(\frac{r_{\alpha,\delta}}{2} \right)^c \right)}\right)=O\left( \|v_\alpha\|_{L^p}\right)
\end{split}
\end{equation}
for all $\alpha$. Putting together \eqref{EstimGreenFun}, {\eqref{GreenFun1}}, \eqref{EstimInsideBall} and \eqref{Inter2}, we have obtained that there exists $C,\bar{C}>0$ such that
\begin{equation}\label{AFirstPointwiseEstim}
v_\alpha(z_\alpha)\le (1+o(1))\frac{\log\frac{C}{|x_\alpha-z_\alpha|^2}}{\gamma_\alpha} +\bar{C}\|v_\alpha\|_{L^p} \,,
\end{equation}
as $\alpha\to \lambda_1^-$. Now we prove \eqref{F0}, which implies
\begin{equation}\label{VpLarge}
\gamma_\alpha \|v_\alpha\|_p\to +\infty\,,
\end{equation}
as $\alpha\to \lambda_1^-$, since {$p\ge 2$}. We multiply \eqref{CBis} by $v$ as in \eqref{DefV}  and integrate in $\Omega$. We get 
\begin{equation}\label{Inter3}
\left(\lambda_1-A_\alpha \right)\int_\Omega v v_\alpha dy=\Lambda_\alpha \int_\Omega v v_\alpha \exp(v_\alpha^2) dy\ge \frac{4\pi v(\bar{x})(1+o(1))}{\gamma_\alpha}
\end{equation}
as $\alpha\to \lambda_1^-$, where $\bar{x}$ is as in \eqref{NotToBdry}, using \eqref{FirstResc}. Since $A_\alpha\to \lambda_1^-$ by \eqref{St1}, we get \eqref{F0} and \eqref{VpLarge} from \eqref{Inter3} and the Cauchy-Schwarz inequality. Now we prove that 
\begin{equation}\label{ConvToV}
\frac{v_\alpha}{\|v_\alpha\|_p}\to v\text{ in }C^1_{loc}(\bar{\Omega}\backslash\{\bar{x}\})\,,
\end{equation}
as $\alpha\to \lambda_1^-$. By \eqref{LambdaTo0}, \eqref{CBis}, \eqref{VTo0}, \eqref{AFirstPointwiseEstim}, \eqref{VpLarge} and elliptic theory, we get that $({v_\alpha}/{\|v_\alpha\|_p})_\alpha$ converges in $C^1_{loc}(\bar{\Omega}\backslash\{\bar{x}\})$ to some $\tilde{v}$ solving
\begin{equation}\label{Inter4}
\Delta \tilde{v}=\lambda_1 \tilde{v}\,, 
\end{equation}
in $\Omega\backslash \{\bar{x}\}$. But, by \eqref{AFirstPointwiseEstim} and \eqref{VpLarge} again, we get that $0\le \tilde{v}\le \bar{C}$ in $\Omega\backslash \{\bar{x}\}$ for $\bar{C}$ as in \eqref{AFirstPointwiseEstim}, that $\tilde{v}$ solves \eqref{Inter4} in $\Omega$ and that $\|\tilde{v}\|_p=1$. Then $\tilde{v}=v$ and \eqref{ConvToV} is proved. Since $r_{\alpha,\delta}\to 0$ and by  \eqref{ConvToV}, we can find a sequence $(\delta_\alpha)_\alpha$ of positive real numbers converging to $0$ such that $\delta_\alpha\ge r_{\alpha,\delta}/2$ and such that
\begin{equation}\label{ConvToV2}
\left\|\frac{v_\alpha}{\|v_\alpha\|_p}-v \right\|_{C^0(\Omega\backslash B_{x_\alpha}(\delta_\alpha))}=o(1)
\end{equation}
as $\alpha\to \lambda_1^-$. Note that \eqref{C0Theory} is already proved by \eqref{NotToBdry}, \eqref{VpLarge} and \eqref{ConvToV2}, if
\begin{equation}\label{AssumptC0Thy}
\liminf_{\alpha\to \lambda_1} |\bar{x}-z_\alpha|>0\,.
\end{equation}
Then, in order to conclude the proof of \eqref{C0Theory}, we assume now that
\begin{equation}\label{AssumptC0Thy2}
z_\alpha\to \bar{x}
\end{equation}
as $\alpha\to\lambda_1^-$. Then, it is known that
\begin{equation}\label{EstimGreenFun2}
G_{z_\alpha}(y)=\frac{1}{4\pi}\log \frac{1}{|y-z_\alpha|^2}+O(1)
\end{equation}
for all $y\neq z_\alpha$ and all $\alpha$. Using {\eqref{ScalRel} and \eqref{RalphaandMualpha}}, we get that
\begin{equation}\label{EstimateRAlphaDelta}
\log \frac{1}{r_{\alpha,\delta}^2}=(1-\delta+o(1))\gamma_\alpha^2\,.
\end{equation}
 Since $z_\alpha\in B_{x_\alpha}\left({r_{\alpha,\delta}} \right)^c$, we get from \eqref{EstimInsideBall}, \eqref{EstimGreenFun2}, \eqref{EstimateRAlphaDelta} that
 \begin{equation}\label{EstimInsideBall2}
 \int_{B_{x_\alpha}\left(\frac{r_{\alpha,\delta}}{2} \right)} G_{z_\alpha}(y) f_{\alpha}(v_\alpha(y))dy=\frac{1}{\gamma_\alpha}\log \frac{1}{|x_\alpha-z_\alpha|^2}+O\left(\frac{1}{\gamma_\alpha} \right)\,.
 \end{equation}
 Now, since $\Delta v=\lambda_1 v$ and $A_\alpha\to \lambda_1^-$, we get by \eqref{LambdaTo0}, \eqref{VTo0}, \eqref{EstimGreenFun} and \eqref{ConvToV2} that
\begin{equation}\label{EstimFarFromXAlpha}
\begin{split} 
\int_{B_{x_\alpha}\left(\delta_\alpha\right)^c} G_{z_\alpha}(y) f_{\alpha}(v_\alpha(y))dy & { =  \|v_\alpha\|_p\lambda_1 \int_{\Omega} G_{z_\alpha }(y) v(y) dy + o(\|v_\alpha\|_p) }\\ 
&= \|v_\alpha\|_p v(z_\alpha)+o(\|v_\alpha\|_p)\,,
\end{split}
\end{equation}
 as $\alpha\to \lambda_1^-$. 
{Now we denote $\Omega_\alpha=B_{x_\alpha}(\delta_\alpha)\backslash B_{x_\alpha}\left(\frac{r_{\alpha,\delta}}{2} \right)$}. {On the one hand,  using \eqref{EstimGreenFun}, \eqref{AFirstPointwiseEstim}, \eqref{VpLarge} and $\delta_\alpha\to 0$ as $\alpha\to \lambda_1^-$,} we get that
 \begin{equation}\label{Inter5}
 \int_{\Omega_\alpha} G_{z_\alpha}(y) A_\alpha v_\alpha dy=O\left(\delta_\alpha^2 \log\frac{1}{\delta_\alpha}\|v_\alpha\|_p\right)+O\left(\frac{1}{\gamma_\alpha}\right)=o(\|v_\alpha\|_p).
\end{equation}
On the other hand, {using \eqref{LambdaTo0}, \eqref{GreenFun1}  \eqref{AFirstPointwiseEstim}, and the dominated convergence theorem} we have that
 \begin{equation}\label{Inter9}
 \begin{split}
 &\int_{\Omega_\alpha} G_{z_\alpha}(y) \Lambda_\alpha v_\alpha\exp(v_\alpha^2) dy\\
 &=o\Bigg( \int_{\Omega_\alpha} \log\frac{C}{|z_\alpha-y|}\left(\frac{1}{\gamma_\alpha}\log\frac{C}{|x_\alpha-y|}+\|v_\alpha\|_p\right)\times\\
 &\quad\quad\quad\quad\quad\quad\exp\left(\left[\frac{1+o(1)}{\gamma_\alpha}\log\frac{1}{|x_\alpha-y|^2}+o(1)\right]^2 \right) dy \Bigg)\\
& = o\left(\|v_\alpha\|_p\right)\,{.} \end{split}
 \end{equation}
Indeed, \eqref{EstimateRAlphaDelta} gives that
\begin{equation}\label{Inter10}
\frac{1}{\gamma_\alpha^2}\log\frac{1}{|x_\alpha-\cdot|^2}\le 1-\delta+o(1)<1
\end{equation}
in $B_{x_{\alpha}}\left(r_{\alpha,\delta}/2\right)^c$, for all $0<\lambda_1-\alpha\ll 1$. Combining \eqref{EstimInsideBall2}, \eqref{EstimFarFromXAlpha}, \eqref{Inter5} and \eqref{Inter9}, we get \eqref{C0Theory}. At last we prove \eqref{AccurateLambdaEstim}.
 By \eqref{ScalRel}, \eqref{EqExpansionBubble}, \eqref{StrongDiff}, we have that
 \begin{equation}\label{Inter7}
 v_\alpha(\tilde{z}_\alpha)=B_\alpha(\tilde{z}_\alpha)+O\left(\frac{1}{\gamma_\alpha} \right)=\frac{1}{\gamma_\alpha}\left(\log \frac{1}{|x_\alpha-\tilde{z}_\alpha|^2}+\log \frac{1}{\gamma_\alpha^2\Lambda_\alpha}+O(1) \right)
 \end{equation}
for all $\alpha$, where $(\tilde{z}_\alpha)_\alpha$ is given such that $\tilde{z}_\alpha\in\partial B_{x_\alpha}(r_{\alpha,\delta})$. But picking $z_\alpha=\tilde{z}_\alpha$ in {\eqref{C0Theory}}, we get from \eqref{VpLarge}, \eqref{AssumptC0Thy2} and \eqref{Inter7} that
\begin{equation}\label{Inter8}
\log\frac{1}{\gamma_\alpha^2\Lambda_\alpha} {=} \gamma_\alpha\|v_\alpha\|_p v(\bar{x})(1+o(1))\to +\infty\,,
\end{equation}
as $\alpha\to \lambda_1^-$, which concludes the proof of \eqref{AccurateLambdaEstim} and that of Step \ref{StGreenFun}.
\end{proof}

\noindent In order to conclude the proof of Proposition \ref{MainProp}, it remains to prove \eqref{F}. 
\begin{proof}[Proof of Proposition \ref{MainProp} (ended)] {By \eqref{CBis}}, in order to get \eqref{F}, it is sufficient to prove that
\begin{equation}\label{FToProve}
\Lambda_\alpha \int_\Omega v_\alpha^2 \exp(v_\alpha^2) dy={4\pi+}o\left(\left( \int_\Omega v_\alpha^2 dy\right)^2 \right){.}
\end{equation}
First{, using \eqref{NewExpEnergy} and the dominated convergence theorem,}  we get that 
\begin{equation}\label{IntFToProve3}
\begin{split}
&\Lambda_\alpha \int_{B_{x_\alpha}(r_{\alpha,\delta})} v_\alpha^2 \exp(v_\alpha^2) dy \\
&=\int_{B_{x_\alpha}(r_{\alpha,\delta})}  \frac{4\exp(-2t_\alpha)}{\mu_\alpha^2}\Bigg(1+\frac{2S_\alpha+t_\alpha^2-2t_\alpha}{\gamma_\alpha^2} {+ O\left(\left(1+\frac{t_\alpha^2}{\gamma_\alpha^2} \right)\frac{|\cdot-x_\alpha|}{r_{\alpha,\delta}}\right) \Bigg) dy}\\
&{\quad\quad\quad\quad\quad\quad +O\left( \int_{B_{x_\alpha}(r_{\alpha,\delta})} \frac{\exp(-\kappa t_\alpha)}{\mu_\alpha^2 \gamma_\alpha^4}dy\right)\,}\\
&=4\int_{B_{x_\alpha}\left(\frac{r_{\alpha,\delta}}{\mu_\alpha} \right)}\exp(-2T_0)\left(1+\frac{(2S_0+T_0^2-2T_0{)}}{\gamma_\alpha^2} \right) dz+O\left(\frac{1}{\gamma_\alpha^4}+\frac{\mu_\alpha}{r_{\alpha,\delta}} \right)\,{.}\\
\end{split}
\end{equation}
{Note that the term of order $\gamma_\alpha^{-2}$ in \eqref{IntFToProve3} vanishes since, by \eqref{EqS} and \eqref{AsymptExpan}, we get}
$$A_0=\int_{\mathbb{R}^2}\Delta S_0 dz=\int_{\mathbb{R}^2}4\exp(-2T_0)T_0 dz\,.$$ Then \eqref{RalphaandMualpha} and \eqref{IntFToProve3} imply
{\begin{equation}\label{FToProve3}
\Lambda_\alpha \int_{B_{x_\alpha}(r_{\alpha,\delta})} v_\alpha^2 \exp(v_\alpha^2) dy = 4\pi + O\left(\frac{1}{\gamma_\alpha^4}\right). 
\end{equation}}
 Independently, we compute 
 \begin{equation}\label{FToProve2}
 \begin{split}
 &\int_{B_{x_\alpha}(r_{\alpha,\delta})^c} v_\alpha^2 \exp(v_\alpha^2) dy\\
 &=O\Bigg(\int_{B_{x_\alpha}(r_{\alpha,\delta})^c} \left(\|v_\alpha\|_p^2+\frac{1}{\gamma_\alpha^2}\left(\log\frac{C}{|x_\alpha-y|}\right)^2 \right)\times\\
&\quad\quad\quad\quad\quad\quad\exp\left(\left[\frac{1+o(1)}{\gamma_\alpha}\log\frac{1}{|x_\alpha-y|^2}+o(1)\right]^2 \right) dy  \Bigg)\,,\\
&=O\left(\|v_\alpha\|_p^2+\frac{1}{\gamma_\alpha^2} \right).
\end{split}
 \end{equation}
arguing as in \eqref{Inter9}, using  \eqref{VTo0}, \eqref{C0Theory}, \eqref{Inter10} and the dominated convergence theorem. At last, we easily get from \eqref{C0Theory}  {and \eqref{VpLarge}} that
 \begin{equation}\label{L2Norm}
 \int_\Omega v_\alpha^2 dy=\|v_\alpha\|_p^2\int_\Omega v^2 dy+o(\|v_\alpha\|_p^2){.}
 \end{equation}
Then, we conclude from \eqref{FToProve3}-\eqref{L2Norm} with \eqref{AccurateLambdaEstim} and \eqref{VpLarge} that \eqref{FToProve} holds true, which concludes the proof of Proposition \ref{MainProp}.
\end{proof}

\nocite{MalchMartJEMS}
\begin{bibdiv}
\begin{biblist}

\bib{AdimurthiDruet}{article}{
      author={Adimurthi},
      author={Druet, O.},
       title={Blow-up analysis in dimension 2 and a sharp form of
  {T}rudinger-{M}oser inequality},
        date={2004},
        ISSN={0360-5302},
     journal={Comm. Partial Differential Equations},
      volume={29},
      number={1-2},
       pages={295\ndash 322},
         url={http://dx.doi.org/10.1081/PDE-120028854},
      review={\MR{2038154}},
}

\bib{AdimurthiStruwe}{article}{
      author={Adimurthi},
      author={Struwe, Michael},
       title={Global compactness properties of semilinear elliptic equations
  with critical exponential growth},
        date={2000},
        ISSN={0022-1236},
     journal={J. Funct. Anal.},
      volume={175},
      number={1},
       pages={125\ndash 167},
         url={http://dx.doi.org/10.1006/jfan.2000.3602},
      review={\MR{1774854}},
}

\bib{CarlesonChang}{article}{
      author={Carleson, Lennart},
      author={Chang, Sun-Yung~A.},
       title={On the existence of an extremal function for an inequality of
  {J}.\ {M}oser},
        date={1986},
        ISSN={0007-4497},
     journal={Bull. Sci. Math. (2)},
      volume={110},
      number={2},
       pages={113\ndash 127},
      review={\MR{878016}},
}

\bib{deFigLions}{article}{
      author={de~Figueiredo, D.~G.},
      author={Lions, P.-L.},
      author={Nussbaum, R.~D.},
       title={A priori estimates and existence of positive solutions of
  semilinear elliptic equations},
        date={1982},
        ISSN={0021-7824},
     journal={J. Math. Pures Appl. (9)},
      volume={61},
      number={1},
       pages={41\ndash 63},
      review={\MR{664341}},
}

\bib{DelPNewSol}{article}{
      author={del Pino, Manuel},
      author={Musso, Monica},
      author={Ruf, Bernhard},
       title={New solutions for {T}rudinger-{M}oser critical equations in
  {$\Bbb R^2$}},
        date={2010},
        ISSN={0022-1236},
     journal={J. Funct. Anal.},
      volume={258},
      number={2},
       pages={421\ndash 457},
         url={http://dx.doi.org/10.1016/j.jfa.2009.06.018},
      review={\MR{2557943}},
}

\bib{DruetDuke}{article}{
      author={Druet, O.},
       title={Multibumps analysis in dimension 2: quantification of blow-up
  levels},
        date={2006},
        ISSN={0012-7094},
     journal={Duke Math. J.},
      volume={132},
      number={2},
       pages={217\ndash 269},
         url={http://dx.doi.org/10.1215/S0012-7094-06-13222-2},
      review={\MR{2219258}},
}

\bib{LinNiAMS}{article}{
      author={Druet, Olivier},
      author={Robert, Fr{\'e}d{\'e}ric},
      author={Wei, Juncheng},
       title={The {L}in-{N}i's problem for mean convex domains},
        date={2012},
        ISSN={0065-9266},
     journal={Mem. Amer. Math. Soc.},
      volume={218},
      number={1027},
       pages={vi+105},
         url={http://dx.doi.org/10.1090/S0065-9266-2011-00646-5},
      review={\MR{2963797}},
}

\bib{DruThiI}{article}{
      author={Druet, Olivier},
      author={Thizy, Pierre-Damien},
       title={{M}ulti-bumps analysis for {T}rudinger-{M}oser nonlinearities
  {I}-{Q}uantification and location of concentration points},
        date={2017},
       pages={64},
        note={Preprint},
}

\bib{Flucher}{article}{
      author={Flucher, Martin},
       title={Extremal functions for the {T}rudinger-{M}oser inequality in
  {$2$} dimensions},
        date={1992},
        ISSN={0010-2571},
     journal={Comment. Math. Helv.},
      volume={67},
      number={3},
       pages={471\ndash 497},
         url={http://dx.doi.org/10.1007/BF02566514},
      review={\MR{1171306}},
}

\bib{Han}{article}{
      author={Han, Zheng-Chao},
       title={Asymptotic approach to singular solutions for nonlinear elliptic
  equations involving critical {S}obolev exponent},
        date={1991},
        ISSN={0294-1449},
     journal={Ann. Inst. H. Poincar\'e Anal. Non Lin\'eaire},
      volume={8},
      number={2},
       pages={159\ndash 174},
         url={http://dx.doi.org/10.1016/S0294-1449(16)30270-0},
      review={\MR{1096602}},
}

\bib{Ishiwata}{article}{
      author={Ishiwata, Michinori},
       title={Existence and nonexistence of maximizers for variational problems
  associated with {T}rudinger-{M}oser type inequalities in {$\Bbb R^N$}},
        date={2011},
        ISSN={0025-5831},
     journal={Math. Ann.},
      volume={351},
      number={4},
       pages={781\ndash 804},
         url={https://doi-org.docelec.univ-lyon1.fr/10.1007/s00208-010-0618-z},
      review={\MR{2854113}},
}

\bib{Lions}{article}{
      author={Lions, P.-L.},
       title={The concentration-compactness principle in the calculus of
  variations. {T}he limit case. {I}},
        date={1985},
        ISSN={0213-2230},
     journal={Rev. Mat. Iberoamericana},
      volume={1},
      number={1},
       pages={145\ndash 201},
         url={http://dx.doi.org.docelec.univ-lyon1.fr/10.4171/RMI/6},
      review={\MR{834360}},
}

\bib{LuYang}{article}{
      author={Lu, Guozhen},
      author={Yang, Yunyan},
       title={Sharp constant and extremal function for the improved
  {M}oser-{T}rudinger inequality involving {$L^p$} norm in two dimension},
        date={2009},
        ISSN={1078-0947},
     journal={Discrete Contin. Dyn. Syst.},
      volume={25},
      number={3},
       pages={963\ndash 979},
         url={http://dx.doi.org/10.3934/dcds.2009.25.963},
      review={\MR{2533985}},
}

\bib{MalchMartJEMS}{article}{
      author={Malchiodi, Andrea},
      author={Martinazzi, Luca},
       title={Critical points of the {M}oser-{T}rudinger functional on a disk},
        date={2014},
        ISSN={1435-9855},
     journal={J. Eur. Math. Soc. (JEMS)},
      volume={16},
      number={5},
       pages={893\ndash 908},
         url={http://dx.doi.org/10.4171/JEMS/450},
      review={\MR{3210956}},
}

\bib{MartMan}{article}{
      author={Mancini, Gabriele},
      author={Martinazzi, Luca},
       title={The {M}oser-{T}rudinger inequality and its extremals on a disk
  via energy estimates},
        date={2017},
        ISSN={0944-2669},
     journal={Calc. Var. Partial Differential Equations},
      volume={56},
      number={4},
       pages={Art. 94, 26},
         url={http://dx.doi.org/10.1007/s00526-017-1184-y},
      review={\MR{3661018}},
}

\bib{Pruss}{article}{
      author={Pruss, Alexander~R.},
       title={Nonexistence of maxima for perturbations of some inequalities
  with critical growth},
        date={1996},
        ISSN={0008-4395},
     journal={Canad. Math. Bull.},
      volume={39},
      number={2},
       pages={227\ndash 237},
         url={http://dx.doi.org/10.4153/CMB-1996-029-1},
      review={\MR{1390360}},
}

\bib{ReyWei}{article}{
      author={Rey, Olivier},
      author={Wei, Juncheng},
       title={Arbitrary number of positive solutions for an elliptic problem
  with critical nonlinearity},
        date={2005},
        ISSN={1435-9855},
     journal={J. Eur. Math. Soc. (JEMS)},
      volume={7},
      number={4},
       pages={449\ndash 476},
         url={http://dx.doi.org/10.4171/JEMS/35},
      review={\MR{2159223 (2006d:35084)}},
}

\bib{StruweCrit}{article}{
      author={Struwe, Michael},
       title={Critical points of embeddings of {$H^{1,n}_0$} into {O}rlicz
  spaces},
        date={1988},
        ISSN={0294-1449},
     journal={Ann. Inst. H. Poincar\'e Anal. Non Lin\'eaire},
      volume={5},
      number={5},
       pages={425\ndash 464},
         url={http://www.numdam.org/item?id=AIHPC_1988__5_5_425_0},
      review={\MR{970849}},
}

\bib{WeiXuYang}{article}{
      author={Wei, Juncheng},
      author={Xu, Bing},
      author={Yang, Wen},
       title={On {L}in-{N}i's conjecture in dimensions four and six.},
        date={2015},
     journal={arXiv:1510.04355},
}

\bib{YangJDE}{article}{
      author={Yang, Yunyan},
       title={Extremal functions for {T}rudinger-{M}oser inequalities of
  {A}dimurthi-{D}ruet type in dimension two},
        date={2015},
        ISSN={0022-0396},
     journal={J. Differential Equations},
      volume={258},
      number={9},
       pages={3161\ndash 3193},
  url={http://dx.doi.org.docelec.univ-lyon1.fr/10.1016/j.jde.2015.01.004},
      review={\MR{3317632}},
}

\end{biblist}
\end{bibdiv}

\end{document}